	\theoremstyle{plain}
	\newtheorem{theorem}{Theorem}[section]
	\newtheorem{lemma}[theorem]{Lemma}
	\theoremstyle{remark}
	\newtheorem{definition}[theorem]{Definition}
	\newtheorem{example}[theorem]{Example}
	\newtheorem{assumption}[theorem]{Assumption}
	\newcommand{\andtext}{\ \mathrm{and}\ }
	\newcommand{\ortext}{\ \mathrm{or}\ }
	\newcommand{\fullstop}{\text{.}}
	\newcommand{\comma}{\text{,}}
	\newcommand{\pvalue}{\(p\)-value}
	\newcommand{\ttest}{\(t\)-test}
	\newcommand{\ztest}{\(z\)-test}
	\newcommand{\textrootn}{root-\(n\)}
	\newcommand{\real}{\mathbb{R}}
	\newcommand{\MSE}{\operatorname{MSE}}
	\newcommand{\disjunion}{\uplus}
	\newcommand{\indep}{\perp \!\!\! \perp}
	\NewDocumentCommand{\convdist}{}{\rightsquigarrow}
	\newcommand{\asymleq}[1]{o\left( #1 \right)}
	\newcommand{\probasymleq}[1]{o_{\mathrm{P}}\left( #1 \right)}
	\newcommand{\asymgtr}[1]{\omega\left( #1 \right)}
	\NewDocumentCommand{\asymstocleq}{O{P_{n, \theta}}}{
		O_{#1}
	}
	\NewDocumentCommand{\asymstocless}{O{P_{n, \theta}}}{
		o_{#1}
	}
	\newcommand{\likelihood}[2]{\mathcal{L}\left({#1}\mid{#2} \right) }
	\NewDocumentCommand{\regcoefx}{m O{\ifnum #1=1 A \else M \fi}}{
		{
			\ifnum #1=1 \gamma \else \beta \fi
		}_{#2}
	}
	\DeclareExpandableDocumentCommand{\IfNoValueOrEmptyTF}{mmm}
	{
		\IfNoValueTF{#1}{#2}
		{
			\tl_if_empty:nTF {#1} {#2} {#3}
		}
	}
	\NewDocumentCommand{\regcoef}{m O{} O{}}{
		{
			\ifnum #1=1 \gamma \else \beta \fi
		}\IfNoValueOrEmptyTF{#2}{}{^{#2}}
		\IfNoValueOrEmptyTF{#3}{}{_{#3}}
	}
	\NewDocumentCommand{\maxregcoef}{O{} O{}}{
		\max \left\lbrace
		\left| \regcoef{1}[#1][#2] \right|,
		\left| \regcoef{2}[#1][#2] \right|
		\right\rbrace
	}
	\NewDocumentCommand{\minregcoef}{O{} O{}}{
		\min \left\lbrace
		\left| \regcoef{1}[#1][#2] \right|,
		\left| \regcoef{2}[#1][#2] \right|
		\right\rbrace
	}
	\newcommand{\regerror}[1]{\varepsilon^{ \ifnum #1=1 M \else Y \fi}}
	\newcommand{\estim}[1]{\avg{\regcoef{#1}}}
	\NewDocumentCommand{\maxestim}{}{
		\max \left\lbrace
		\left| \estim{1} \right|,
		\left| \estim{2} \right|
		\right\rbrace
	}
	\NewDocumentCommand{\minestim}{}{
		\min \left\lbrace
		\left| \estim{1} \right|,
		\left| \estim{2} \right|
		\right\rbrace
	}
	\NewDocumentCommand{\product}{O{\estim{1}}O{\estim{2}}}{{#1}\,{#2}}
	\newcommand{\mle}[1]{\widehat{#1}}
	\NewDocumentCommand{\estimreg}{O{n}}{T_{#1}}
	\NewDocumentCommand{\estimfil}{O{n}}{T^{\left(S \right)}_{#1}}
	\NewDocumentCommand{\medparam}{O{} O{}}{\theta^{#1}_{#2}}
	\NewDocumentCommand{\medparamzero}{O{}}{\theta^{#1}_{0}}
	\NewDocumentCommand{\medparamfull}{O{}}{\left( \regcoef{1}[#1], \regcoef{2}[#1] \right)}
	\NewDocumentCommand{\medparamspace}{}{\Theta}
	\NewDocumentCommand{\medparamnullspace}{}{\Theta_0}
	\NewDocumentCommand{\medparamnullsubspace}{m}{\Theta_{#1}}
	\NewDocumentCommand{\regpval}{m}{p^{\left(\regcoef{#1} \right) }}
	\NewDocumentCommand{\maxregpval}{}{
		\max \left\lbrace \regpval{1}, \regpval{2} \right\rbrace
	}
	\NewDocumentCommand{\reghyp}{m m}{H_{#1}^{\left(\regcoef{#2} \right) }}
	\NewDocumentCommand{\pjoint}{}{p_{\mathrm{joint}}}
	\NewDocumentCommand{\hypname}{m O{}}{H_{#1}^{#2}}
	\NewDocumentCommand{\filhyp}{m O{}}{\widetilde{H}_{#1}^{#2 }}
	\NewDocumentCommand{\basestat}{O{} O{}}{T^{#1}_{#2}}
	\NewDocumentCommand{\baserr}{O{} O{}}{R^{#1}_{#2}}
	\NewDocumentCommand{\baserradj}{O{} O{}}{\baserr[{#1} \IfNoValueOrEmptyTF{#1}{}{,} \mathrm{adj}][#2]}
	\NewDocumentCommand{\filtstat}{O{} O{}}{S^{#1}_{#2}}
	\NewDocumentCommand{\filtrr}{O{} O{}}{Q^{#1}_{#2}}
	\NewDocumentCommand{\filevent}{O{} O{}}{\filtstat[#1][#2] \notin \filtrr[#1][#2]}
	\NewDocumentCommand{\nfilevent}{O{} O{}}{\filtstat[#1][#2] \in \filtrr[#1][#2]}
	\NewDocumentCommand{\baserejevent}{O{} O{}}{\basestat[#1][#2] \in \baserr[#1][#2]}
	\NewDocumentCommand{\nbaserejevent}{O{} O{}}{\basestat[#1][#2] \notin \baserr[#1][#2]}
	\NewDocumentCommand{\baserejeventadj}{O{} O{}}{\basestat[#1][#2] \in \baserradj[#1][#2]}
	\NewDocumentCommand{\nbaserejeventadj}{O{} O{}}{\basestat[#1][#2] \notin \baserradj[#1][#2]}
	\NewDocumentCommand{\shrinkestim}{O{} O{}}{
		{\basestat[#1][#2]}^{\left( \filtstat[#1] \right) }
	}
	\NewDocumentCommand{\shrinkestimdef}{O{} O{}}{
		\left(\basestat[#1][#2]-\psi\left(\medparamzero \right)  \right) \indc{\nfilevent} + \psi\left(\medparamzero \right)
	}
	\NewDocumentCommand{\hypset}{}{\mathcal{H}}
	\newcommand{\nullcase}[1]{(#1)}
	\NewDocumentCommand{\hypnull}{O{H_0} m}{{#1}\!: #2}
	\NewDocumentCommand{\hypaltr}{O{H_1} m}{\hypnull[#1]{#2}}
	\NewDocumentCommand{\hypothesis}{O{H_0} m O{H_1} m}{
		\hypnull[#1]{#2} \quad \text{vs.} \quad \hypnull[#3]{#4}
	}
	\NewDocumentCommand{\NIE}{O{a, a^*} O{Y}}{\mathrm{NIE}_{\,#1}\left(#2\right)}
	\newcommand{\cond}[2]{\left.{#1}\,\middle|\,{#2}\right.}
	\newcommand{\filter}{*}
	\NewDocumentCommand{\lpspace}{O{p}}{\(\mathscr{L}^{#1}\)}
	\NewDocumentCommand{\lpnorm}{O{\cdot}O{p}}{\left\| {#1} \right\|_{#2} }
	\NewDocumentCommand{\contunifdist}{O{0}O{1}}{\operatorname{U}\left(#1, #2 \right) }
	\NewDocumentCommand{\meansym}{}{\mathrm{E}}
	\NewDocumentCommand{\lmeanbrkt}{}{[}
	\NewDocumentCommand{\rmeanbrkt}{}{]}
	\NewDocumentCommand{\mean}{m}{\meansym \left\lmeanbrkt {#1} \right\rmeanbrkt}
	\NewDocumentCommand{\var}{m}{\operatorname{var}\left( {#1} \right) }
	\newcommandx{\prob}[3][1=\theta, 2={n,}, usedefault=@]{P_{#2 #1}{\left( #3 \right) }}
	\NewDocumentCommand{\avg}{m}{\overline{#1}}
	\NewDocumentCommand{\nordist}{O{0} O{1}}{\operatorname{N}\left( {#1},{#2} \right) }
	\NewDocumentCommand{\indc}{s m}{
		\mathbf{1}_{\IfBooleanTF{#1}{#2}{\left[{#2}\right]}}
	}
	\newcommand{\transpose}[1]{#1^\mathsf{T} }
	\numberwithin{equation}{section}
	\numberwithin{table}{section}
	\numberwithin{figure}{section}
\newcommand*{\algrule}[1][\algorithmicindent]{\makebox[#1][l]{\hspace*{.5em}\thealgruleextra\vrule height \thealgruleheight depth \thealgruledepth}}%
\newcommand*{\thealgruleextra}{}
\newcommand*{\thealgruleheight}{.75\baselineskip}
\newcommand*{\thealgruledepth}{.25\baselineskip}
\def\ALG@printindent{%
	\ifnum \theALG@nested>0
	\ifx\ALG@text\ALG@x@notext
	\else
	\unskip
	\addvspace{-1pt}
	\ALG@printindent@tempcnta=1
	\loop
	\algrule[\csname ALG@ind@\the\ALG@printindent@tempcnta\endcsname]%
	\advance \ALG@printindent@tempcnta 1
	\ifnum \ALG@printindent@tempcnta<\numexpr\theALG@nested+1\relax
	\repeat
	\fi
	\fi
}%
\patchcmd{\ALG@doentity}{\noindent\hskip\ALG@tlm}{\ALG@printindent}{}{\errmessage{failed to patch}}
\newbox\statebox
\newcommand{\myState}[1]{%
	\setbox\statebox=\vbox{#1}%
	\edef\thealgruleheight{\dimexpr \the\ht\statebox+1pt\relax}%
	\edef\thealgruledepth{\dimexpr \the\dp\statebox+1pt\relax}%
	\ifdim\thealgruleheight<.75\baselineskip
	\def\thealgruleheight{\dimexpr .75\baselineskip+1pt\relax}%
	\fi
	\ifdim\thealgruledepth<.25\baselineskip
	\def\thealgruledepth{\dimexpr .25\baselineskip+1pt\relax}%
	\fi
	\State #1%
	\def\thealgruleheight{\dimexpr .75\baselineskip+1pt\relax}%
	\def\thealgruledepth{\dimexpr .25\baselineskip+1pt\relax}%
}
\let\originalleft\left
\let\originalright\right
\renewcommand{\left}{\mathopen{}\mathclose\bgroup\originalleft}
\renewcommand{\right}{\aftergroup\egroup\originalright}
\begin{document}

\begin{frontmatter}

\title{Improving Efficiency of Tests for Composite Null Hypotheses}
\runtitle{Improving efficiency of tests for composite null hypotheses}


\author{
	\fnms{Yotam} \snm{Leibovici} \ead[label=e1]{yotaml@technion.ac.il} \and{}
	\fnms{Yair} \snm{Goldberg} \ead[label=e2]{yairgo@technion.ac.il}
}
\address{
	Faculty of Industrial Engineering and Management,\\
	Technion---Israel Institute of Technology, Israel\\
	\printead{e1,e2}
}

\runauthor{Y.\ Leibovici and Y.\ Goldberg}

\begin{abstract}
The goal of mediation analysis is to study the effect of exposure on an outcome interceded by a mediator. Two simple hypotheses are tested: the effect of the exposure on the mediator, and the effect of the mediator on the outcome. 
When either of these hypotheses is true, a predetermined significance level can be assured. When both nulls are true, the same test becomes conservative. Adaptively finding the correct scenario enables customizing the tests and consequently enlarges their efficiency, which is most important in a multiple testing framework.  In this work, we link between adaptive two-stage procedures and shrinkage estimators. We first study the properties of shrinkage estimators, and characterize their behavior at different parameter points using local asymptotics. We formulate theoretical results regarding shrinkage estimators, compared to regular estimators. We then discuss the multiple-testing framework and state results about using shrinkage estimator in  two-stage procedures for controlling the FWER. Taking advantage of these theoretical results, we suggest a number of estimators and test statistics for the two-stage mediation procedures. We then investigate their empirical FWER and power, compared to regular estimators and tests, through simulations. \end{abstract}

 \begin{keyword}[class=MSC]
 \kwd[Primary ]{62F05}
 \end{keyword}

\begin{keyword}
\kwd{Local asymptotics}
\kwd{family-wise error rate}
\kwd{mediation analysis}
\kwd{shrinkage estimator}
\kwd{intersection-union test}
\end{keyword}



\end{frontmatter}


\section{Introduction}



Mediation analysis, a research area within the field of causal inference, is a collection of methods employed for studying the causal effect of an exposure on an outcome, interceded by a mediator.

For instance, obesity has already been found to be correlated to poor socioeconomic level in childhood \citep{senese_associations_2009}, as well as to changes in DNA methylation process \citep{borghol_associations_2012, agha_adiposity_2015}. \citet{huang_genome-wide_2019} hypothesized that socioeconomic disadvantages in childhood alter DNA methylation, which in turn affects obesity in adulthood, and used mediation methods for testing this hypothesis. This is a typical example of the mediation process: the hypothesis tested whether the outcome (obesity) is affected by the exposure (socioeconomic level) through a mediator (DNA methylation).



In mediation analysis, two hypotheses are tested: the effect of the exposure on the mediator, and the effect of the mediator on the outcome. Finding both of them as true is accepting the hypothesis that the influence of the exposure on the outcome is mediated, and that the mediator is essential for understanding the connection between them. Hence, we are interested in a composite hypothesis which contains both of the preceding hypotheses. Some statistical literature refers to this setting of composite hypotheses, as the \emph{intersection-union test} \citep{berger_bioequivalence_1996, liu_uniformly_1995, berger_multiparameter_1982
}.

The null hypothesis of mediation can be divided into three cases: The first is when the mediator is not significantly influenced by the exposure but significantly influence the outcome;
the second is when the mediator is significantly influenced by the exposure but it does not significantly influences the outcome;
and the third case is when both influences are insignificant.
Standard tests for the hypothesis, such as the \emph{joint significance test}, which rejects the hypothesis if both influences were found as significant \citep{mackinnon_comparison_2002}, may be optimal under the first two cases, but become much more conservative under the third, as their significance level decreases. In genome-wide epidemiological research, 
when the number of hypotheses can be extremely large, the third case emerges more often. Consequently, employing more adaptive methods of analyzing mediation data is crucial for efficient research \citep{barfield_testing_2017}. 


One way to handle the power loss in the third case, is to use two-stage testing procedures \citep{ignatiadis_data-driven_2016, djordjilovic_optimal_2020}. Consider a multiple testing problem over a common parameter space. Assume that the first and the second cases of the null are composite hypotheses while the third case is a simple hypothesis consisting of a single parameter point. Assume also that for each hypothesis there is a test which is efficient for the first two cases, but not for the third case. We refer to this test as the \emph{base test}. 

The two-stage procedure is defined as follows. In the first stage, for every hypothesis, a stricter test called the \emph{filtration test} is performed. Its aim is to filter, or retain, null hypotheses that are easy to identify, specifically the third case hypotheses. 
In the second stage, the original (base) test, up to an adjustment, is performed for the unfiltered hypotheses.
This adjustment depends not only on the filtration test itself, but also on the size of the unfiltered hypotheses set.

The asymptotic behavior of this two-stage procedure needs to be considered cautiously. In practice, depending on the parameter values under the null, one may not be able to distinguish between, say, the first and the third cases. Thus, one has to take into account the local asymptotic behavior, which can vary across different parameter points and different kinds of filtration tests. In fact, the commonly used terminology to describe the parameter points' characterization is discrete and composed of the three aforementioned cases of the null and the alternative case. We use a continuous terminology to describe the parameter points. Specifically, we consider sequences of null hypotheses which belong, for instance, to the first case but converge to the third. We also consider sequences of filtration tests, which, depending on the convergence rate, may or may not be able to distinguish between the first and the third case. 
This approach is more appropriate for this context, in which there are infinite null parameter points, which are distinct from each other by their local geometry and asymptotic behavior, as well as alternatives that can approach different null points with various rates.

In order to get insights about this two-stage procedure, we first present the connection between filtration tests and shrinkage estimators. Indeed, the filtration test retains the null if it is likely that the value of the test statistic is a function of observations which are distributed according to the single parameter point of the third case. 
This in fact means that the null is retained for all parameter points which are closed, in some sense, to the third case. However, when the observations are drawn from parameter points that are not closed to the third case, the hypothesis is not filtered, and thus the base test, up to adjustment, is performed. This is related to shrinkage estimators, where the estimator is either shrinked to a specific point, when it is likely that the observations were distributed according to this parameter point, but otherwise it remains as is.

The canonical expample of shrinkage estimator is the well-known Hodges' estimator \citep{le_cam_asymptotic_1953}. For a location parameter model (e.g., normal distribution with unknown mean), Hodges' estimator aims to identify more efficiently whether the true parameter point is \(0\) or not. Asymptotically, the variance of the Hodges' estimator is lower than the Cram\'er-Rao bound at \(0\), which was considered to be the optimal that can be achieved. Such estimators are called \emph{super-efficient}. \citet{le_cam_asymptotic_1953} then showed that the Hodges' estimator has various undesirable properties in a neighborhood of the parameter where the super-efficiency was obtained.

In this work we study the properties of shrinkage estimators, and characterize their behavior in different parameter points. We formulate theoretical results about filtration estimators, compared to regular estimators, in different scenarios of convergence rates and filtration probabilities, under some regularity conditions. We return to the multiple-testing framework in order to state results about the two-stage tests and controlling the FWER. We suggest a number of estimators and test statistics for the two-stage mediation procedures. We then investigate their empirical FWER and power, compared to regular estimators and tests, through simulations.

This work is organized as follows. In Section~\ref{sec:med}, we describe the mediation test setting and discuss the power loss challenge. In Section~\ref{sec:filt}, the filtration tests framework is formulated, and linked to shrinkage estimators. In Section~\ref{sec:rates}, we define efficiency in this setting and investigate convergence rates of estimators depending on the convergence of the parameter points. In Section~\ref{sec:la}, we present theoretical results about these estimators and the two-stage testing procedure. In Section~\ref{appmc}, we perform simulation study of the two-stage procedure on the multiple-testing framework with various estimators. Proofs that were omitted from the main paper are provided in the Appendix.

\section{Testing for Mediation} \label{sec:med}

\subsection{Definitions and Assumptions} \label{sec:med-defs}

Being one of the main tools in causal analysis, mediation analysis has become a statistical subject of major interest. In the basic setting, one considers a causal system, which includes (see Figure \ref{fig:causalsystem}):
\begin{enumerate*}[label=(\alph*)]
	\item an exposure $A$;
	\item an outcome $Y$ affected by the exposure;
	\item a mediator, i.e., a variable $M$ added to the system, which is examined for how significantly it is infleunced by $A$ and influences $Y$; and
	\item a vector of baseline covariates $X$, which includes every other variable that may possibly have an influence on some of the variables above.
\end{enumerate*}
Having been revealed as a significant part of the system, the mediator might give a better perspective for statistical inference.

\begin{figure}
	\centering
	
	\begin{tikzpicture}[->,>=stealth',shorten >=1pt,auto,node distance=3cm,
	thick,main node/.style={circle,fill=blue!20,draw,font=\sffamily,
	}]
	
	\node[main node, fill=green!20] (X) at (0,0)  {X};
	\node[main node, fill=blue!20]  (A) at (4,1)  {A};
	\node[main node, fill=red!20]   (M) at (6,-1) {M};
	\node[main node, fill=blue!20]  (Y) at (8,1)  {Y};
	
	\path[every node/.style={font=\sffamily\small,
		,inner sep=1pt}]
	(X) edge [dashed] node {} (A)
	edge [dashed] node {} (M)
	edge [dashed, bend left=40] node{} (Y)
	(A) edge node {} (M)
	edge node {} (Y)
	(M) edge node {} (Y);
	
	\end{tikzpicture}
	
	\caption[Causal system for mediation]{Causal system with exposure $A$ affecting outcome $Y$, a mediator $M$ and an underlying vector of covariates $X$.} \label{fig:causalsystem}
\end{figure}
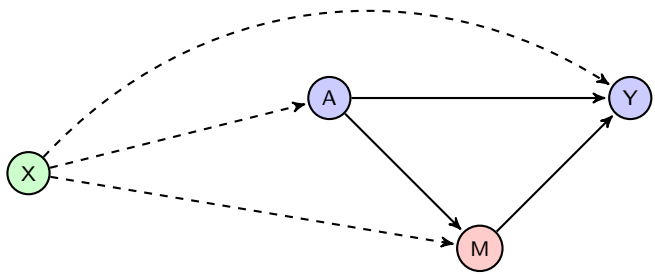

Formally, consider the following linear model for the connection between the variables:
\begin{equation} \label{eq:medregmodel}
	\begin{split}
		M &= \regcoefx{1}[0] + \transpose{\regcoefx{1}[X]} X + \regcoefx{1}[A] A + \regerror{1} \text{,} \\
		Y &= \regcoef{2}[][0] + \transpose{\regcoefx{2}[X]} X + \regcoefx{2}[A] A + \regcoefx{2}[M] M + \regerror{2} \text{.}
	\end{split}
\end{equation}
The mediation hypothesis concerns whether or not the mediator \(M\) explains significantly the causal connection between the exposure \(A\) and the outcome \(Y\). The extent to which the mediator effects this connection is formulated by the \emph{natural indirect effect}, NIE, whose definition requires an additional notation.

For two random variables, \(B\) and \(C\), where \(C\) is affected by \(B\), the potential outcome \(C_b\) is used for the random variable that its possible values correspond to the possible values \(C\) would attain had a variable \(B\) had been set to a value \(b\) \citep{pearl_direct_2001, pearl_causal_2009}. The NIE is defined as
\[\NIE = \mean{Y_{aM_{a^*}} - Y_{a^*M_{a^*}}} \comma \] 
which is the expected change in the value of \(Y\), when the value of the exposure \(A\) is fixed to \(a^*\), and the value of the mediator \(M\) changes its value from \(M_a\) to \(M_{a^*}\) \citep{robins_identifiability_1992, pearl_direct_2001,vanderweele_conceptual_2009}. Defined that way, the NIE is nonzero exactly when a change in the exposure would make such change in the mediator that in turn would change the effect, therefore it formally conveys our notion of mediation \citep{vanderweele_three-way_2013}.

The mediation hypothesis can now be phrased formally as
\[\hypothesis{\NIE = 0}{\NIE \neq 0} \fullstop\]
Note that \(Y_{aM_{a^*}}\) and \(Y_{a^*M_{a^*}}\) are \emph{counterfactual quantities}, i.e., they are not observed directly, thus in many cases the NIE cannot be evaluated from data with no additional assumptions. Consider the following four assumptions about the model, illustrated in Figure \ref{fig:forbidden}, for all possible values \(a\), \(a^*\) and \(m\) of \(A\) and \(M\), respectively. Here, the independence of $B$ and $C$ conditional on $D$ is denoted by $\cond{B \indep C}{D}$.
\begin{enumerate}[label = (\roman*)] \label{assumptions}
	\item $Y_a \indep  A \mid X$: No unknown confounders for the exposure-outcome relationship.
	\item $Y_{am} \indep M \mid  A,X  $: No unknown confounders for the mediator-outcome relationship, conditional on the exposure.
	\item $M_a \indep A \mid X$: No unknown confounders for the exposure-mediator relationship.
	\item $Y_{am} \indep M_{a^*} \mid X$: No unknown confounders for the mediator-outcome relationship, that itself is affected by exposure.
\end{enumerate}
 Under those assumptions, the NIE can be evaluated using the coefficients in the regression models, as \(\NIE = \regcoefx{1} \regcoefx{2} \left( a - a^* \right) \) \citep{pearl_direct_2001, vanderweele_three-way_2013, barfield_testing_2017}. With \(a \neq a^*\), this reduces the mediation hypothesis to
 \begin{equation} \label{eq:medhyp}
 	\hypothesis{
 		\regcoefx{1} \regcoefx{2} = 0
 	}{
 		\regcoefx{1} \regcoefx{2} \neq 0
 	} \fullstop
 \end{equation}

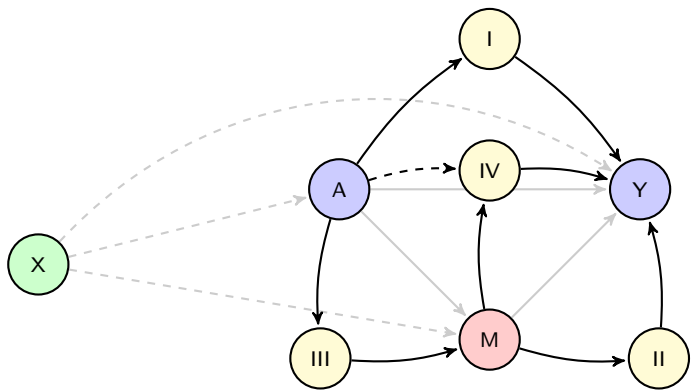
\begin{figure}
	\centering
	\begin{tikzpicture}[->,>=stealth',shorten >=1pt,auto,node distance=3cm,
	thick,main node/.style={circle,fill=blue!20,draw,font=\sffamily,minimum size=8mm,
	}]
	
	\node[main node, fill=green!20] (X) at (0,0)  {X};
	\node[main node, fill=blue!20]  (A) at (4,1)  {A};
	\node[main node, fill=red!20]   (M) at (6,-1) {M};
	\node[main node, fill=blue!20]  (Y) at (8,1)  {Y};
	\node[main node, fill=yellow!20]  (3) at (3.75,-1.25)  {III};
	\node[main node, fill=yellow!20]  (2) at (8.25,-1.25)  {II};
	\node[main node, fill=yellow!20]  (1) at (6,3)  {I};
	\node[main node, fill=yellow!20]  (4) at (6,1.25)  {IV};
	
	\begin{scope}[on background layer]
	\path[thick,every node/.style={font=\sffamily\small
		,inner sep=1pt}]
	(X) edge [dashed, black!20] node {} (A)
	edge [dashed, black!20] node {} (M)
	edge [dashed, bend left=40, black!20] node{} (Y)
	(A) edge[black!20] node {} (M)
	edge [black!20] node {} (Y)
	edge[bend right=10] node{} (3)
	edge[bend left=10] node{}(1)
	edge [dashed,bend left=10] node {} (4)
	(M) edge[black!20] node {} (Y)
	edge[bend right=10] node{} (2)
	edge[bend left=10] node{} (4)
	(3) edge[bend right=10] node{} (M)
	(2) edge[bend right=10] node{} (Y)
	(1) edge[bend left=10] node{} (Y)
	(4) edge[bend left=10] node{} (Y)
	
	;
	\end{scope}
	\end{tikzpicture}
	\caption[Causal system assumptions]{Assumptions (i)--(iv) in Section \ref{assumptions} mean that connections through variables I, II, III and IV, respectively, do not exist.} \label{fig:forbidden}
\end{figure}

\subsection{Conservativity in Composite Hypotheses} \label{sec:med-powerloss}

Let \(\regcoef{1} = \regcoefx{1}\) and \(\regcoef{2} = \regcoefx{2}\) be the coefficients in the regression equations from the mediation model in \eqref{eq:medregmodel}. As mentioned, a test for the mediation effect, formulated as NIE nonzero, is equivalent to a test for the hypothesis in \eqref{eq:medhyp}.
Put in statistical terms, for the parameter of interest \(\medparam = \medparamfull \), the full parameter space is \(\medparamspace = \real^2\), whereas the null space of this hypothesis is \(\medparamnullspace = \Set{\medparamfull \in \real^2 | \regcoef{1} \regcoef{2} = 0 }\). The null space \(\medparamnullspace\) can be naturally divided into three complementary sub-spaces, \(\medparamnullspace = \medparamnullsubspace{00} \disjunion \medparamnullsubspace{01} \disjunion \medparamnullsubspace{10}\) (where \(\disjunion\) stands for disjoint union),
\begin{align} \label{eq:threecases}
\begin{split}
\Theta_{00} &= \Set{\medparamfull \in \Theta_0 | \regcoef{1} = 0 \andtext \regcoef{2} = 0} \text{,} \\
\Theta_{01} &= \Set{\medparamfull \in \Theta_0 | \regcoef{1} = 0 \andtext \regcoef{2} \neq 0} \text{,} \\
\Theta_{10} &= \Set{\medparamfull \in \Theta_0 | \regcoef{1} \neq 0 \andtext \regcoef{2} = 0} \fullstop
\end{split}
\end{align}
We refer to the case that the true parameter \(\medparam\in\medparamnullsubspace{ij}\) as Case~\nullcase{\(ij\)}, for \(i, j = 0, 1\). Cases~\nullcase{01} and~\nullcase{10} were mentioned in the introduction as the first and the second cases, whereas Case~\nullcase{00} was mentioned as the third case.

Let $\mle{\regcoef{1}}$ be a linear-regression estimator for the parameter $\regcoef{1}$ and let \(\mle{\regcoef{2}}\) be an estimator for $\regcoef{2}$. From the above four assumptions, one can show that the errors \(\regerror{1}\) and \(\regerror{2}\) are independent \citep{huang_genome-wide_2019}. A natural point estimator for the function of the parameter which appears in \eqref{eq:medhyp}, \(\psi = \psi\left(\theta \right)  = \psi\left(\regcoef{1}, \regcoef{2} \right) = \regcoef{1} \regcoef{2}\), would be \(T = \mle{\regcoef{1}} \mle{\regcoef{2}}\). Similarly, the same \(T\) can be used as a test statistic for the hypothesis \eqref{eq:medhyp}. This product estimator's asymptotic behavior depends on which of the sub-spaces of the null contains the true parameter. As explained below, when \(\psi = 0\), the rate of convergence of \(T\) is root-\(n\) for Cases \nullcase{01} and~\nullcase{10}, but is \(n\) for Case~\nullcase{00}.

Treating each of the parameters \(\regcoef{1}\) and \(\regcoef{2}\) separately and considering the two simple hypotheses,
\begin{align}
\label{eq:param1hyp}
\hypothesis[\reghyp{0}{1}]{\regcoef{1} = 0&}[\reghyp{1}{1}]{\regcoef{1} \neq 0} \comma\\
\label{eq:param2hyp}
\hypothesis[\reghyp{0}{2}]{\regcoef{2} = 0&}[\reghyp{1}{2}]{\regcoef{2} \neq 0 \comma}
\end{align}
yields to another approach. Rephrased as \[\hypothesis{\regcoef{1} = 0 \ortext \regcoef{2} = 0}{\regcoef{1} \neq 0 \andtext \regcoef{2} \neq 0} \comma\] hypothesis \eqref{eq:medhyp} becomes a special case of \emph{intersection-union hypotheses}:
\begin{equation} \label{eq:unionhyp}
H_0 = H_0^{\left( \regcoef{1} \right)} \cup H_0^{\left( \regcoef{2} \right)} \fullstop
\end{equation}
Here, the null hypothesis \(H_0\) is true if and only if at least one of the two simple hypotheses is true.

It is possible to test the hypothesis \eqref{eq:medhyp}, decomposed as union hypothesis in \eqref{eq:unionhyp}, using the \emph{joint significance test}. Let $\regpval{1}$ and $\regpval{2}$ denote the \pvalue{}s for the hypotheses in \eqref{eq:param1hyp} and \eqref{eq:param2hyp}, respectively, based on the estimators \(\mle{\regcoef{1}}\) and \(\mle{\regcoef{2}}\). When \(\regcoef{1} = 0\), then $\regpval{1} \sim\contunifdist $, while when \(\regcoef{1} \neq 0\), we have $\regpval{1} \to 0$, assuming consistency of \(\mle{\regcoef{1}}\), as the sample size increases. A natural statistic for the union test is \(p_{\mathrm{joint}}=\max \left\lbrace \regpval{1},\regpval{2} \right\rbrace \). Under Case~\nullcase{01}, and by symmetry also under Case~\nullcase{10}, \(\pjoint\) is distributed approximately \(\contunifdist\). Indeed, for \(\alpha \in \left(0,1 \right) \), \[P(\pjoint\le\alpha) = P(\regpval{1}\le\alpha)P(\regpval{2}\le\alpha) \to 1 \cdot \alpha = \alpha \comma\]
where we used the independence of \(\regpval{1}\) and \(\regpval{2}\).
However, under Case~\nullcase{00},
\[P(\pjoint\le\alpha) = P(\regpval{1}\le\alpha)P(\regpval{2}\le\alpha) \to \alpha \cdot \alpha = \alpha^2 \fullstop\]

In our framework, maintaining a significance level of $\alpha$ would force rejection of the joint-significance test only for \pvalue s that are less than the threshold $\alpha$, for Cases \nullcase{01} and \nullcase{10}. However, if we knew in advance that we are under Case~\nullcase{00} is the only possible option in the null, we could adjust the threshold to a larger value, $\sqrt{\alpha}$, and as a result amplify the power in that case. An oracle revealing the current case 
would allow us to optimally customize our test and maximize the power, at least asymptotically. 

\section{Filtration} \label{sec:filt}
So far we discussed a single mediation problem, which yields a single hypothesis testing. In this section, we consider the mediation problem in a multiple-testing framework.

\subsection{The Filtration Algorithm} \label{sec:filt-alg}

In general, given a set of hypotheses, filtration is a preliminary test (the \emph{filtration test}) performed on every hypothesis ahead of the main test (the \emph{base test}), in order to determine which hypotheses are likely to belong to the null. The idea behind the filtration is that some null hypotheses may behave in a way that will ease identifying them, thus adjusting the criteria for them to be rejected, and hopefully increase power. Filtration is a popular method of increasing power in multiple hypotheses testing \citep{mcclintick_effects_2006, talloen_ini-calls_2007, hackstadt_filtering_2009}.

A filtration procedure is a two-stage process, which might have the form detailed in Algorithm~\ref{alg:filtration} \citep{bourgon_independent_2010, ignatiadis_data-driven_2016}. The algorithm is described for \(\hypset\), a collection of \(m\) hypotheses, with \(\basestat[i]\) and \(\baserr[i]\) as test statistic and rejection region, respectively, for the base test of the \(i\)-th hypothesis, and \(\filtstat[i]\) and  \(\filtrr[i]\) as test statistic and rejection region for the filtration test.

\begin{algorithm}
	\caption{Filtration} \label{alg:filtration}
	\begin{algorithmic}[0]
		\State \(\hypset \gets \left\lbrace 1, \dots, m \right\rbrace \)
		\Comment{The hypotheses set}
		\ForAll{\(i \in \hypset\)}
		\If{\(\filevent[i]\)}
		\State Retain \(\hypname{0}[i]\)
		\State \(\mathcal{H} \gets \mathcal{H} \smallsetminus \left\lbrace i \right\rbrace \)
		\Comment{Filter the improbable \(i\)-th hypothesis}
		\EndIf
		\EndFor
		\State \textsc{Adjust}\(\left( \left\lbrace R^i \right\rbrace _{i \in \mathcal{H}}\right) \)
		\Comment{To control the FWER}
		\ForAll{\(i \in \hypset\)}
		
		\If{\(\baserejeventadj[i]\)}
		\State Reject \(\hypname{0}[i]\)
		\Else
		\State Retain \(\hypname{0}[i]\)
		\EndIf
		\EndFor
	\end{algorithmic}
\end{algorithm}

\subsection{Shrinkage Estimators} \label{sec:filt-shrinkage}

In Algorithm \ref{alg:filtration}, fix a specific hypothesis of index \(i\), and omit the index for convenience. Consider a base test statistic \(\basestat\) which is an estimator for \(\psi\left(\theta \right) \), and a filtration test statistic \(\filtstat\) with rejection region \(\filtrr\), which, depending on filtering or un-filtering event, indicates the distance from the true parameter \(\psi\left(\theta \right) \) to the predefined point \(\psi\left(\theta_0 \right) \) (in the mediation hypothesis, \(\theta_0 = \left(0,0 \right) \)). A point-estimation equivalent of this two-stage process can be phrased as
\begin{equation} \label{eq:filtestimator}
	\shrinkestim = \left(T - \psi\left(\theta_0\right)\right) \indc{\nfilevent} + \psi\left( \theta_0\right) \fullstop
\end{equation}
The estimator \eqref{eq:filtestimator} encodes inside itself the two-stage process of Algorithm~\ref{alg:filtration}. Indeed, if the filtration test was not rejected (i.e., a filtration event occurred), the estimator's value becomes \(\psi\left(\theta_0 \right) \) for \(\theta_0 \in \Theta_0\), and when the filtration test was rejected, \eqref{eq:filtestimator} remains the same as the original \(\basestat\). Thus, \(\shrinkestim\) can be used as a test statistic to the test with an adjusted rejection region \(\baserradj\). Note that the obtained estimator is a so-called \emph{shrinkage estimator}. As an example, consider the base-test statistic \(T = \product\) and the filtration test \(\filevent\) as \(\left| \product \right| < cn^{-\delta} \), for a constant \(\delta > 0 \). The estimator obtained is \(\product \,\indc{\left| \product \right| > cn^{-\delta} }\).

Shrinkage estimators were originally produced in order to improve performance over standard estimators, usually in a specific parameter point, obtained by ``stretching'' the values of estimator towards a fixed parameter point. In many cases, the obtained improvement depends on the performance measurement (e.g., the loss function used) and might be problematic at a closer look.

A possible form of a shrinkage estimator, based on an existing estimator \(\basestat\) for \(\psi\left( \theta \right) \), and some shrinkage function \(A_{\theta_0}\), can be written as
\begin{equation} \label{eq:shrinkage}
	S_{\theta_0}\left(T\right) = \left(T - \psi\left(\theta_0 \right)  \right) A_{\theta_0}\left(T\right)  + \psi\left(\theta_0 \right) \comma
\end{equation}
which generalizes the form in \eqref{eq:filtestimator}. Two well-known examples of shrinkage estimators which fit into this form are James-Stein estimator \citep{kotz_estimation_1992} and Hodges' estimator \citep{le_cam_asymptotic_1953, van_der_vaart_asymptotic_1998}.

One of the versions of Hodges' estimator is defined as follows. Consider a location model, specifically normal distribution \(\nordist[\mu][1]\) with unknown mean and a fixed variance. Let $X_1,\dots,X_n$ be a sequence of i.i.d.\ observations from the distribution, and define, for estimating the expectation $\mu$,
\begin{equation} \label{eq:hodges}
	\widetilde{\mu} = \avg{X} \indc{\left| \avg{X}\right| >n^{-1/4}} \fullstop
\end{equation}
It can be shown \citep{van_der_vaart_asymptotic_1998} that while at every parameter point \(\mu \neq 0\) the estimator \(\widetilde{\mu}\) behaves asymptotically similar to the regular \(\avg{X}\) with root-\(n\) rate of convergence, when \(\mu = 0\), \(\widetilde{\mu}\) converges to \(\mu = 0\) with an ``arbitrarily fast" rate. The \emph{case} \(\mu = 0\) is now distinguished from all the others.

The idea behind the shrinkage estimators we discussed and showed in \eqref{eq:filtestimator} is inspired from the idea behind Hodges' estimator, and shrinkage estimators generally. In both situations, a regular estimator is manipulated in order to use inference about the current case of the parameter, and by that improve estimation. 
Unfortunately, the superiority of Hodges' estimator at $\mu=0$ comes with shortcomings at parameter points in its neighborhood. For the sake of formulating this idea, we shall use the local asymptotic framework of statistical experiments and their limits.


\subsection{Shrinkage Estimators for Testing Mediation} \label{sec:rates-teststats}
\label{sec:filt-teststats}

Consider the multiple mediation problem, with a collection of hypotheses \begin{equation}\label{eq:medhypmulti}
\hypothesis[\hypname{0}[i]]{\regcoef{1}[i] \regcoef{2}[i] = 0}[\hypname{1}[i]]{\regcoef{1}[i] \regcoef{2}[i] \neq 0} \text{,\quad for}\ i =  1,\dots,m \fullstop
\end{equation}
For each of the hypotheses, the null space contains parameter points with at least one coordinate equals to \(0\), and similarly composed to three sub-spaces as in \eqref{eq:threecases}. In our setting, the aim of the filtration step is to identify null hypotheses of Case~\nullcase{00}, based on the different properties that these null hypotheses have. The filtration test thus, has a simple null hypothesis form \[\hypothesis[\filhyp{0}[i]]{\theta^i = \theta_0}[\filhyp{1}[i]]{\theta^i \neq \theta_0}\comma\] where \(\medparam[i] = \medparamfull[i]\) and \(\medparamzero = \left(0,0 \right) \).

One approach for finding test statistic is using the likelihood-ratio test,
\begin{equation} \label{eq:lrt}
	\Lambda = \frac{\sup_{\theta \in \Theta}{\likelihood{\theta}{X}}}{\sup_{\theta \in \Theta_{00}}{\likelihood{\theta}{X}}} \fullstop
\end{equation}
Direct calculation shows that the likelihood-ratio test statistic is \(\left\|\left(  \estim{1}, \estim{2} \right) \right\|^2 \), so it can be considered as a natural test statistic \(T_\filter\) for the filtration test.

The likelihood-ratio approach can be applied also to the base test statistic \(T\). Here, the separation is between the null \(\Theta_0 = \left( \left\lbrace 0 \right\rbrace \times \real \right) \cup \left( \real \times \left\lbrace 0 \right\rbrace \right) \) and \(\Theta = \real^2\).  Given the estimator \(\estim{1}\) and \(\estim{2}\), the numerator always equals to a constant, i.e.\ the value the likelihood function returns for \(\theta = \left( \estim{1}, \estim{2} \right) \). The denominator is the likelihood function's value when \(\theta = \left(\estim{1}, 0 \right)\) if \(\estim{1} \le \estim{2}\) and \( \theta = \left(0, \estim{2} \right)\) otherwise. Therefore, the test statistic is \(\minestim\), which is equivalent to \(\maxregpval\). This supplies a justification for choosing this statistic for the base test hypothesis.

Phrasing the abstract separation idea can be as follows. We would like to find a function \(\phi\) which maximizes \(P_{\theta }\left( \phi\left( \estim{1}, \estim{2} \right) > t \right)\) under an arbitrary parameter \(\theta \), which is not in Case~\nullcase{00}, while maintaining the same probability \(P_{\theta }\left( \phi\left( \estim{1}, \estim{2} \right) > t \right) \leq \alpha\) for \(\theta \) in Case~\nullcase{00}. Not always there is a perfect test which has the uniformly best power among those with the same level. 

Let us restrict ourselves only to filtration tests \(\nfilevent\) of the form \(\phi\left(\estim{1},\estim{2} \right) \leq t \). Thus, we should consider and compare various functions \(\phi\), when the purpose of each of them is to \emph{separate} or \emph{distinguish} between the point \(\left(0,0 \right) \) and other points in \(\Theta\).
Later on we will study the function \(\phi:\left(x, y \right) \mapsto xy\) in depth. We will also discuss other functions, such as \(\left(x,y\right) \mapsto \left\|\left(x,y\right)\right\|\), \(\left(x,y\right) \mapsto \left\|\left(x,y\right)\right\|^2\), \(\left(x,y\right) \mapsto  \max{\left\lbrace \left| x\right| ,\left| y\right| \right\rbrace }\), \(\left(x,y\right) \mapsto \left|x\right|  + \left|y\right|\), and in general \(\left(x, y \right) \mapsto \left\|\left(x,y \right)  \right\|_p  \), for the \lpspace-norm \(\left\|\cdot \right\|_p \), \(p \in \left[1,\infty\right] \). 
The induced shrinkage estimators for the product estimator would have the form \(\product \indc{\product \in R_\filter}\), and the shrinkage estimator for the screen-min procedure \citep{djordjilovic_optimal_2020}, which filters according to the minimal \pvalue{}, will be \(\minestim \indc {\maxestim \in \filtrr}\). Shrinkage estimators for other statistics can be constructed similarly.

In order to draw useful insights about these estimators as well as others, we need to study and compare their local asymptotic behavior in neighborhoods of parameter points.

\section{Convergence Rates} \label{sec:rates}

\subsection{Convergence Rates} \label{sec:rates-general} \label{sec:rates-framework}

Let \(\regcoef{1} = \regcoef{1}[A]\) and \(\regcoef{2} = \regcoef{2}[M]\) be the coefficients from the regression equation \eqref{eq:medregmodel}, and let \(\gamma,\beta\) their respective estimators. 
Under standard regularity conditions, \(\sqrt{n}\left( \mle{\regcoef{1}} - \regcoef{1}\right) \to \nordist[0][\sigma^2_{\regcoef{1}}] \) and \(\sqrt{n}\left( \mle{\regcoef{2}} - \regcoef{2}\right) \to \nordist[0][\sigma^2_{\regcoef{2}}] \). Thus, \(\mle{\regcoef{1}} = \avg{\regcoef{1}} + \probasymleq{1}\), \(\estim{1} \sim \nordist[\regcoef{1}][\frac{1}{n} \sigma^2_{\regcoef{1}}]\) and \(\mle{\regcoef{2}}  = \avg{\regcoef{2}} + \probasymleq{1}\), \(\estim{2} \sim \nordist[\regcoef{2}][\frac{1}{n} \sigma^2_{\regcoef{2}}]\), where \(\avg{\regcoef{1}}\) and \(\avg{\regcoef{2}}\) can be thought of as sample means approximations. In this section we will analyze the estimators \(\estim{1}\) and \(\estim{2}\) based on their normal approximation \(\regcoef{1}\) and \(\regcoef{2}\) \citep[see also discussion in][]{huang_genome-wide_2019}.
As we show below, the estimator \(\product\), similarly to the original estimator \(\mle{\gamma} \mle{\beta}\), behaves differently in the parameter point \(\left(0,0 \right) \) compared to the other parameter points in the null.


Consider the estimator $\widehat{\theta} = \product$.
In order to calculate the exact convergence rate, write
\begin{equation} \label{eq:productdecomp}
	\estim{1} \estim{2} - \regcoef{1} \regcoef{2} = 
	\left( \estim{1} - \regcoef{1} \right) \left( \estim{2} - \regcoef{2} \right) + \regcoef{2} \left( \estim{1} - \regcoef{1} \right) + \regcoef{1} \left( \estim{2} - \regcoef{2} \right) \fullstop
\end{equation}
When \(\regcoef{1} = \regcoef{2} = 0\), the last two summands vanish, and we are left with one summand of \(n\)-rate. 

Another way of showing that is using the delta method. \citet{sobel_asymptotic_1982} showed that, \[\sqrt{n}\left(\left(\estim{1}, \estim{2}  \right) - \left(  \regcoef{1}, \regcoef{2} \right) \right) \to \nordist[0][\Sigma] \] for \(\Sigma = \operatorname{diag}\left( \sigma_{\regcoef{1}}^2, \sigma_{\regcoef{2}}^2 \right) \). Then, for \(\phi\left(x, y \right) = xy \),
\begin{equation} \label{eq:deltaproduct}
	\begin{gathered}
	\sqrt{n}\left( \product - \regcoef{1} \regcoef{2} \right) = \sqrt{n}\left(f\left(\estim{1}, \estim{2}  \right) - f\left(  \regcoef{1}, \regcoef{2} \right) \right) \to \\ \nordist[0][\transpose{f'\left(\regcoef{1}, \regcoef{2} \right)} \Sigma f'\left(\regcoef{1}, \regcoef{2} \right)] = \nordist[0][\regcoef{1}^2 \sigma_{\regcoef{2}}^2 + \regcoef{2}^2 \sigma_{\regcoef{1}}^2] \fullstop
	\end{gathered}
\end{equation}
Note that for parameter points in Cases \nullcase{01} and \nullcase{10}, the convergence rate is root-\(n\). As already proved in exact terms in \eqref{eq:productdecomp}, Case~\nullcase{00} yields a higher rate.

	The delta method argument can be generalized to show that the estimator \(\product\) is not the only example for estimator with higher rate of convergence in \(\left(0,0 \right) \).

Consider the shrinkage estimator in \eqref{eq:filtestimator} where the filtration test statistic \(\filtstat\) has the form \(\phi\left( \estim{1}, \estim{2} \right) \). Interestingly, under some conditions on \(\phi\), the convergence rate of \(\phi\left( \estim{1}, \estim{2} \right) \) might depend on the parameter point.

\begin{lemma} \label{lem:higherrate}
	Let \(\phi:\real^p\rightarrow\real\) be a function which is differentiable and attains its minimum at \(\theta_0\). Let \(T_n = \left( T_{n1},\dots,T_{np} \right) \) be a consistent estimator for \(\theta\) with convergence rate \(r_n\). Then, the convergence rate of \(\phi\left( T_n \right)\) at \(\theta_0\) that is faster than \(r_n\).
\end{lemma}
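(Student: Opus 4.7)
The plan is to exploit the assumption that $\phi$ attains its minimum at $\theta_0$ together with differentiability there. Since $\theta_0$ is an interior minimum of a differentiable function $\phi:\real^p\to\real$, the gradient must vanish: $\nabla\phi(\theta_0) = 0$. This vanishing derivative is exactly what distinguishes $\theta_0$ from a generic parameter point, where the classical (first-order) delta method would give convergence of $\phi(T_n)$ at the same rate $r_n$ as $T_n$ itself.

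Given this observation, I would apply a first-order Taylor expansion of $\phi$ about $\theta_0$:
\[\phi(T_n) - \phi(\theta_0) = \transpose{\nabla\phi(\theta_0)}(T_n - \theta_0) + o\left( \|T_n - \theta_0\| \right) \fullstop\]
The linear term vanishes by the previous step, leaving $\phi(T_n) - \phi(\theta_0) = o(\|T_n - \theta_0\|)$. Combining this with the meaning of rate-$r_n$ consistency, namely $T_n - \theta_0 = \asymstocleq{1/r_n}$ at $\theta_0$, gives $\phi(T_n) - \phi(\theta_0) = \probasymleq{1/r_n}$, which is equivalent to $r_n\left( \phi(T_n) - \phi(\theta_0) \right) \to 0$ in probability. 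By definition, this says that the convergence rate of $\phi(T_n)$ toward $\phi(\theta_0)$ is strictly faster than $r_n$, which is the claim.

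The main delicate step is the passage from the deterministic Taylor remainder $o(\|T_n-\theta_0\|)$ to the stochastic statement $o_P(1/r_n)$. This is the standard device: for every $\varepsilon>0$, the event on which the remainder is bounded by $\varepsilon\|T_n-\theta_0\|$ has probability tending to $1$ by continuity of the derivative approximation and consistency of $T_n$, after which multiplication by $r_n$ preserves the $o_P$ statement. A secondary subtlety worth noting is that the lemma only gives a qualitative improvement; to pin down the exact improved rate, which in the nondegenerate second-order case is $r_n^2$ (as happens for $\phi(x,y)=xy$ at the origin, recovering the $n$-rate derived concretely in \eqref{eq:productdecomp}), one would need twice-differentiability of $\phi$ together with a nondegenerate Hessian at $\theta_0$, and use a second-order expansion of the form $\phi(T_n)-\phi(\theta_0) = \tfrac{1}{2}\transpose{(T_n-\theta_0)} H(\theta_0) (T_n-\theta_0) + o(\|T_n-\theta_0\|^2)$. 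For the stated lemma, however, the first-order argument above suffices.
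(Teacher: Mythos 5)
Your argument is correct and is essentially the paper's own proof: the paper applies the delta method of van der Vaart (Theorem 3.1) to conclude \(r_n\left(\phi\left(T_n\right)-\phi\left(\theta_0\right)\right)\convdist \phi'\left(\theta_0\right)\cdot L = 0\), and you have simply unpacked that delta method into its underlying first-order Taylor expansion, with the same key observation that the gradient vanishes at the interior minimum. Your closing remark on the second-order expansion and the \(r_n^2\) rate is a sensible addition but goes beyond what the lemma claims.
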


\begin{proof}
	Since \(\phi\) is convex and symmetric, it attains a global minimum at \(\theta_0\). Let \(r_n\left( X_n - \theta \right) \convdist L\), for some distribution law \(L\). By the delta method \citep[Theorem~3.1]{van_der_vaart_asymptotic_1998}, \(r_n\left( \phi\left( X_n\right)  - \phi\left( \theta \right)\right) \convdist \phi'\left(\theta_0 \right) \cdot L \).
	At the minimum point \(\theta_0\) the derivative vanishes, so the latter limit equals 0, and thus the rate is higher than \(r_n\).
\end{proof}

 Consider for instance the separation function \(\phi\left(x,y \right) = x^2 + y^2 \). This function can be used for creating a test statistic for the filtration test. Using Lemma~\ref{lem:higherrate}, it can shown that \( \estim{1}^2 + \estim{2}^2\) is another example for an estimator with higher rate of convergence at \(\left(0,0 \right) \).

\subsection{Irregularity of Estimators} \label{sec:rates-irreg}

One way of coping with the problem of different convergence rates is using normalization. Consider the following two examples: the squared-norm estimator \( \estim{1}^2 + \estim{2}^2\), and the product estimator \(\product\). As mentioned in the previous section, for both of them, the convergence rate at 0 is different from other parameter point, which can be thought of as a singularity point of the standard deviation in \(0\). Dividing these estimators by (an estimator for) their limit standard deviation, as it appears in the limit distribution in the delta method, can be used in order to remedy the difference in rates.
The obtained normalized estimators, up to an asymptotically negligible terms in the standard deviation are, respectively, \[\frac{ \estim{1}^2 + \estim{2}^2}{\sqrt{\sigma_{\regcoef{1}}^2 \regcoef{1}^2 +  \sigma_{\regcoef{2}}^2 \regcoef{2}^2}} \comma\]
and
the so-called Sobel's test statistic \citep{sobel_asymptotic_1982, mackinnon_comparison_2002, mackinnon_introduction_2012, huang_genome-wide_2019},
\begin{equation} \label{eq:sobel}
\frac{\product}{\sqrt{\sigma_{\regcoef{2}}^2 \estim{1}^2 + \sigma_{\regcoef{1}}^2 \estim{2}^2}} \fullstop
\end{equation}
Sobel's test is usually performed as a \ttest\ or \ztest\, based on the normal approximation in~\eqref{eq:deltaproduct}.

It can be shown that the convergence rate of both estimators is indeed root-\(n\), even at Case~\nullcase{00}. However, the asymptotic behavior of these estimators is still problemtic for Case~\nullcase{00}, and for the sake of formulating this idea, we will first need the following definition of regular estimators.

\begin{definition}[{\citealp[Chapter~8.5]{van_der_vaart_asymptotic_1998}}] \label{def:regest}
	An estimator \(T\) for \(\psi \left( \theta \right) \) is regular at a parameter point \(\theta\) if
	\[\sqrt{n} \left( T - \psi \left( \theta + \frac{h}{\sqrt{n}} \right) \right) = L_\theta \]
	\(L_\theta\) is the same limit distribution for every \(h \in \real\).
\end{definition}

The following lemma, whose its proof is given in the Appendix, sheds light on the behavior of the normalized estimators in \(\left( 0,0\right) \).

\begin{lemma} \label{lem:irregularity}
	Both of the estimators \[\frac{\product}{\sqrt{\sigma_{\regcoef{2}}^2 \estim{1}^2 + \sigma_{\regcoef{1}}^2 \estim{2}^2}}\] and \[\sqrt{\sigma_{\regcoef{2}}^2 \estim{1}^2 + \sigma_{\regcoef{1}}^2 \estim{2}^2}\] are  irregular at \(\left(0,0 \right) \).
\end{lemma}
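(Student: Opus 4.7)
The strategy is to exploit local asymptotic normality at \(\theta_0 = \left(0,0\right)\) and reduce irregularity to a shape comparison of limit distributions along two local parameter sequences. Under \(\theta_n = h/\sqrt{n}\) with \(h = \left(h_1, h_2\right) \in \real^2\), the joint weak convergence
\[\left(\sqrt{n}\,\estim{1},\ \sqrt{n}\,\estim{2}\right) \convdist \left(Z_1, Z_2\right)\]
holds with \(Z_1 \sim \nordist[h_1][\sigma^2_{\regcoef{1}}]\) and \(Z_2 \sim \nordist[h_2][\sigma^2_{\regcoef{2}}]\) independent. By the continuous mapping theorem,
\[\sqrt{n}\,\sqrt{\sigma^2_{\regcoef{2}}\,\estim{1}^2 + \sigma^2_{\regcoef{1}}\,\estim{2}^2} \convdist W_h := \sqrt{\sigma^2_{\regcoef{2}} Z_1^2 + \sigma^2_{\regcoef{1}} Z_2^2}\]
for the squared-norm estimator, and, after multiplying numerator and denominator by suitable powers of \(\sqrt{n}\),
\[\sqrt{n}\,\frac{\product}{\sqrt{\sigma^2_{\regcoef{2}}\,\estim{1}^2 + \sigma^2_{\regcoef{1}}\,\estim{2}^2}} \convdist V_h := \frac{Z_1 Z_2}{\sqrt{\sigma^2_{\regcoef{2}} Z_1^2 + \sigma^2_{\regcoef{1}} Z_2^2}}\]
for Sobel's statistic.

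Next, for any candidate target function \(\psi\) (for which consistency at \(\left(0,0\right)\) forces \(\psi\left(0,0\right) = 0\)), the quantity \(\mu_h := \lim_{n \to \infty} \sqrt{n}\,\psi\left(h/\sqrt{n}\right)\), when it exists, is a deterministic constant. Hence \(\sqrt{n}\left(T_n - \psi\left(\theta_n\right)\right) \convdist W_h - \mu_h\) in the first case and \(V_h - \mu_h\) in the second, and subtracting \(\mu_h\) only translates the limit. To rule out regularity it therefore suffices to exhibit two values of \(h\) for which the unshifted limits have incompatible shapes. For the squared-norm estimator, \(W_0\) is a scaled chi random variable with two degrees of freedom, supported on \(\left[0,\infty\right)\) and positively skewed, while along \(h = \left(h_1, 0\right)\) with \(h_1 \to \infty\) a first-order Taylor expansion gives \(W_h = \sigma_{\regcoef{2}}\left|Z_1\right| + o_{\mathrm{P}}\left(1\right)\), asymptotically Gaussian with variance \(\sigma^2_{\regcoef{1}}\sigma^2_{\regcoef{2}}\); no translation turns a Gaussian law into a one-sided, skewed one. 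For Sobel's statistic, \(V_0\) is symmetric about \(0\) with heavy, non-Gaussian ratio-of-normals tails, while for the same alternative with \(h_1 \to \infty\) one obtains \(V_h \convdist Z_2/\sigma_{\regcoef{2}} \sim \nordist[0][1]\), so again no translation can reconcile the two shapes.

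The main obstacle is the implicit universal quantifier over \(\psi\) in the definition of regularity: the argument must rule out every admissible centering, not just the natural one. The handle is the observation that only the shape (distribution modulo translation) of the limit may vary with \(h\), and that shape is invariant under the translation induced by \(\psi\); exhibiting two values of \(h\) producing incompatible shapes then closes the argument. The only technical care required is a uniform-integrability justification of the Gaussian approximation of \(W_h\) and \(V_h\) along \(h_1 \to \infty\), which follows from standard moment bounds on the pre-limit random variables.
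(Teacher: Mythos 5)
Your overall route is the same as the paper's: reparametrize locally as \(\theta_n = h/\sqrt{n}\), use the joint convergence of \(\left(\sqrt{n}\,\estim{1},\sqrt{n}\,\estim{2}\right)\) and the continuous mapping theorem to identify the limit law of \(\sqrt{n}\left(T_n - \psi\left(\theta_n\right)\right)\) as an explicit functional of \(\left(h_1+Z_1, h_2+Z_2\right)\), and then show that this law genuinely depends on \(h\). The paper does exactly this and simply compares \(h=(0,0)\) with \(h=(1,0)\); your additional observation that any admissible centering \(\psi\) only translates the limit, so that it suffices to compare \emph{shapes}, is a legitimate strengthening, and your treatment of the norm statistic (a scaled Rayleigh/\(\chi_2\) law at \(h=0\) versus an approximately Gaussian law along \(h=(h_1,0)\), \(h_1\to\infty\)) is sound.

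There is, however, a factual error in the Sobel half. The limit at \(h=0\) is \emph{not} ``symmetric with heavy, non-Gaussian ratio-of-normals tails.'' Writing \(Z_1=\sigma_{\gamma}U_1\), \(Z_2=\sigma_{\beta}U_2\) with \(U_1,U_2\) i.i.d.\ standard normal, one gets
\[
V_0=\frac{Z_1Z_2}{\sqrt{\sigma_{\beta}^2Z_1^2+\sigma_{\gamma}^2Z_2^2}}
=\frac{U_1U_2}{\sqrt{U_1^2+U_2^2}}
=\frac{R}{2}\sin\left(2\Theta\right)\sim \operatorname{N}\left(0,\tfrac{1}{4}\right)
\]
in polar coordinates — exactly Gaussian (this is precisely the classical reason Sobel's test is conservative under Case~(00)). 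So the step ``no translation can reconcile a non-Gaussian shape with a Gaussian one'' has nothing to bite on for Sobel's statistic. The conclusion is still reachable, but you must argue it differently: translation preserves variance, and the limiting variance is \(1/4\) at \(h=0\) while it tends to \(1\) along \(h=(h_1,0)\) as \(h_1\to\infty\) (hence exceeds \(1/4\) for some finite \(h\)), so the family of limit laws cannot be constant modulo translation. Alternatively, do what the paper does and exhibit two finite values of \(h\) directly. You should also drop the closing appeal to uniform integrability — weak convergence along \(h_1\to\infty\) needs only control of the \(o_{\mathrm{P}}(1)\) remainder, not integrability.
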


Note that the original, unnormalized estimators, \(\product\) and \(\sigma_{\regcoef{2}}^2 \estim{1}^2 + \sigma_{\regcoef{1}}^2 \estim{2}^2\), are regular at \(\left(0,0 \right) \) as the limit distribution is always \(0\). As it turns out, by performing the normalization, the problem of different rates of convergence is only replaced with the irregularity at the point \(\left(0,0 \right) \), which makes the estimator's limit behavior depend on small neighborhood of the parameter and the convergence direction of the parameter sequence.

As a conclusion of the previous discussion, we see that different estimators, have various strengths and weaknesses. Therefore, choosing the base and filtration test statistics for the shrinkage estimator in \eqref{eq:filtestimator} becomes complicated. There is not necessarily one optimal way to do so, and there is a trade-off to be considered between the different options.


\section{Local Asymptotic Analysis} \label{sec:la}

\label{sec:la-localparam}

In order to examine efficiency of estimators in the neighborhood of a parameter point, we use the local asymptotics framework, which basically lets the parameter space depend on $n$. In this section, we follow \citet[Chapters 7, 9, and 15]{van_der_vaart_asymptotic_1998}. Formally, the probability measure is of the form \(P_{n,\theta} = P_{\theta_n}^n\), which means that as the number of observations increases, the true parameter also changes. Usually \(\theta_n = \theta + r_n^{-1}h\), i.e., \(\theta_n\) converges to a constant parameter \(\theta\) with rate \(r_n\).


Back to the Hodges example presented in \eqref{eq:hodges}, the Hodges' estimator should also be examined in the local asymptotic framework, for a sequence of parameters \(\mu_n\) that approaches~\(0\). When \(\mu_n = \asymleq{n^{-1/4}}\) and also \(\mu_n = \asymgtr{n^{-1/2}}\), for instance when \(\mu_n = n^{-1/3}\), then \(n^{1/2}\left(\widetilde{\mu} - \mu \right) \to -\infty \) \citep[Example~8.1]{van_der_vaart_asymptotic_1998}. This means that although the estimator looks more efficient than the regular one at \(0\), there are sequences of parameters \(\mu_n\) which their normalized distance from the estimator and diverges. The exact rate of convergence depends on the problem in question.



\subsection{Characterization of Shrinkage Estimators} \label{sec:la-char}
Filtration estimators, in general, have properties similar to the Hodges' estimator. Asymptotic improvement in the rate of convergence in some points exists only with corresponding low performance in others. We aim to identify the regions in which a filtration estimator is better as well as weaker than the original one.

	The common measure for comparing efficiency of two estimators \(T_1\) and \(T_2\) of a parameter \(\psi\left(\theta \right) \) in a parameter point \(\theta\) is the \emph{asymptotic relative efficiency (ARE)}. The ARE is equivalent to the limit ratio of the corresponding variances,
	\[\operatorname{ARE}\left(T_1, T_2 ; \theta \right) = \lim \frac{\sigma_2^2\left(\theta\right)}{\sigma_1^2\left(\theta\right)} \fullstop\]
	The interpretation for the ARE is the rate it takes to \(T_2\) to estimate \(\psi\left(\theta \right) \) with variance~\(1\), relative to \(T_1\). Thus, ARE less than~\(1\) indicates that \(T_2\) is more efficient than \(T_1\), and vice versa.
	
	The ARE, as formulated explicitly above, takes into account only the variance of the estimators, and not the asymptotic bias rate of convergence. In Hodges-like filtration estimators, the asymptotic bias is usually the cost for the super-efficiency in some points. In order to consider both these quantities, the asymptotic variance and the asymptotic bias, into the comparison, we shall compare their mean squared error (MSE). Indeed, the MSE weighs the variance and the square of the bias with equal weights: \(\MSE\left(T,\theta \right) = \mean{\left(T-\theta \right)^2 } = \var{T} + \mean{T-\theta}^2 \). Generalizing the definition of ARE, we treat MSE-ratio less than \(1\) as indicating that \(T_2\) is more efficient than \(T_1\) and vice versa.

	Let \(M\) be the limit MSE-ratio of \(T_2\) relatively to \(T_1\). We say that \(T_2\) is much more efficient than \(T_1\) when \(M = 0\); more efficient when \(0<M<1 \), equivalent when \(M = 1\); less efficient when \(1<M<\infty \); and much less efficient when \(M = \infty\).

	\begin{assumption} \label{assum:asymunbiased}
		The original estimator \(\estimreg\) is \(r_n\)-consistent for some rate \(r_n\), such that the sequence \(r_n\left(T-\psi\left(\theta \right)  \right) \) has limit distribution with mean \(0\) and finite variance.
		%
		
	\end{assumption}
	This assumption holds true, for example, for all asymptotically linear estimators, with \(r_n = \sqrt{n}\). Under this assumption, the following theorem holds.
	
	


	\begin{theorem} \label{thm:effclass}
		Let \(\basestat[][n]\) be the regular estimator, and \(\shrinkestim[][n] = \shrinkestimdef[][n] \) be the filtration estimator based on it.
		Let \(L = \lim P_{\theta_n} \left( \filevent[][n] \right) \) and \(K = \lim \frac{1}{\sigma_{T_n}} \left( \psi\left(\theta_n\right) - \psi\left(\theta_0\right) \right)\). 
		\begin{enumerate}[label = \roman*.]
			\item When \(L = 1\), then \(\shrinkestim[][n]\) is much more efficient than \(\estimreg\) when \(K = 0\); more efficient when \(0 < K < 1\); equivalent when \(K = 1\); less efficient when \(1<K<\infty\); and much less efficient when \(K = \infty\).
			\item When \(0 < L < 1\), then when \(K = 0\), \(\estimfil\) is more efficient than \(\estimreg\), and when \(K = \infty\), the estimator is much less efficient, and for \(0<K<\infty\), the reltaive MSE of the estimators can be either more efficient, equivalent or less efficient.
			\item When \(L = 0\), then \(\estimfil\) equivalent to the original \(\estimreg\).
			
		\end{enumerate}
	\end{theorem}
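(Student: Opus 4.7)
The plan is to compute $\MSE(\estimfil)$ in closed form via the indicator structure of \eqref{eq:filtestimator} and then divide by $\MSE(\estimreg)\sim\sigma_{\estimreg}^{2}$, the latter equivalence coming from Assumption \ref{assum:asymunbiased}. Using $\indc{\nfilevent[][n]}+\indc{\filevent[][n]}=1$, a short rearrangement gives the pointwise identity
\[
\estimfil-\psi(\theta_n)=(\estimreg-\psi(\theta_n))\,\indc{\nfilevent[][n]}-(\psi(\theta_n)-\psi(\theta_0))\,\indc{\filevent[][n]}\fullstop
\]
The two indicators are disjoint, so squaring kills the cross term, and
\[
\MSE(\estimfil)=\mean{(\estimreg-\psi(\theta_n))^{2}\indc{\nfilevent[][n]}}+(\psi(\theta_n)-\psi(\theta_0))^{2}\,P_{\theta_n}(\filevent[][n])\fullstop
\]
Dividing by $\sigma_{\estimreg}^{2}$, the second summand converges to $K^{2}L$ by the definitions of $K$ and $L$, while the first summand is nonnegative and bounded above by $\mean{(\estimreg-\psi(\theta_n))^{2}}/\sigma_{\estimreg}^{2}\to 1$, using Assumption \ref{assum:asymunbiased}.

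The proof then splits into three cases according to $L$. For (i) $L=1$, $\indc{\nfilevent[][n]}\to 0$ in probability, and uniform integrability of the standardised squared error forces the first summand to vanish, leaving limit ratio $K^{2}$; the five subcases are read off by checking $K^{2}$ at $K=0$, $\in(0,1)$, $=1$, $\in(1,\infty)$, $=\infty$. For (iii) $L=0$, $\indc{\nfilevent[][n]}\to 1$ in probability, dominated convergence gives the first summand limit $1$, and $K^{2}L\to 0$ (under the implicit assumption that $K$ is bounded in this regime), yielding a ratio of $1$. For (ii) $0<L<1$, the first summand has some limit $\ell\in[0,1]$ determined by the joint asymptotic law of the standardised error and the filtration indicator, and under asymptotic independence of the two one obtains $\ell=1-L$; combined with the second summand $K^{2}L$, at $K=0$ the ratio is $\ell<1$ (more efficient), at $K=\infty$ the second term alone blows up (much less efficient), and for $0<K<\infty$ the quantity $\ell+K^{2}L$ can be tuned to lie below, at, or above $1$.

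The main technical hurdle is the first summand $\mean{(\estimreg-\psi(\theta_n))^{2}\indc{\nfilevent[][n]}}/\sigma_{\estimreg}^{2}$: one must pass to the limit inside the expectation, which calls for uniform integrability of the standardised squared error (supplied by Assumption \ref{assum:asymunbiased} after a mild truncation argument), and one must identify the joint limit of $(\estimreg,\indc{\filevent[][n]})$. When $\filtstat[][n]$ is built from the same data as $\estimreg$---as with the mediation-style filters $\filtstat[][n]=\phi(\estim{1},\estim{2})$---these two quantities are not independent, and the joint limit has to be described through the local-asymptotic framework of Section \ref{sec:la}. Precisely this joint dependence is what permits all three qualitative outcomes in Case (ii) for $0<K<\infty$, and it can be exhibited by varying $\phi$ and the direction of approach $\theta_n\to\theta_0$.
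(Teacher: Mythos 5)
Your proof follows essentially the same route as the paper's: the identical decomposition of $\MSE(\estimfil)$ into the unfiltered squared-error term plus the squared bias weighted by the filtration probability, normalisation by $\var{\estimreg}$ justified through Assumption~\ref{assum:asymunbiased}, uniform integrability to make the first summand vanish (case $L=1$) or tend to $1$ (case $L=0$), and the same case analysis in $L$ and $K$ with limit ratio $K^2$ when $L=1$. The only real difference is in case (ii), where the paper invokes its auxiliary Lemma~A.1 to obtain the strict bounds $0<\liminf<\limsup<1$ on the first summand (needed to conclude ``more efficient'' when $K=0$), whereas you appeal to the joint limit law of the standardised error and the filtration indicator; both treatments leave a comparable amount of work implicit at that point.
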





Note that the existence of the cases discussed above is affected by the chosen test statistics and the rejection regions for the base and filtration tests. 

\subsection{Case Study: The Product Statistic} \label{sec:la-casestudy}

Recall that \(\estim{1}\) and \(\estim{2}\) are the estimators for the regression coefficients \(\regcoef{1}\) and \(\regcoef{2}\) in the regression Model \eqref{eq:medregmodel}, respectively. In this section we use Theorem~\ref{thm:effclass} to study the properties of the shrinkage estimator \(\estimfil = \product \indc{n^\delta \left| \product \right| > c}\) based on the base and filter test statistic \(\product = \psi\left(\estim{1},\estim{2} \right) \), for \(\psi\left(x, y \right) = xy \). Its aim, as detailed in Section \ref{sec:filt}, is to filter hypotheses which are likely to belong to the null parameter point \(\left(0,0 \right) \). 



We now consider sequences of parameters \(\theta_n = \regcoef{1}[][n]\regcoef{2}[][n]\) approaching the constant true parameter \(\theta_0 = \regcoef{1}[][0]\regcoef{2}[][0]\). The rate of convergence, \(K = \lim \frac{1}{\sigma_{T_n}} \left( \psi\left(\theta_n\right) - \psi\left(\theta_0\right) \right)\), is in this case \[K = \lim \frac{\regcoef{1}[][n]\regcoef{2}[][n]}{\sqrt{n^{-1}\left(n^{-1}+\regcoef{1}[][n]^2 + \regcoef{2}[][n]^2 \right) }} \comma\]

The value of \(K\) can be partially characterized as a function of \(\regcoef{1}[][n]\) and \(\regcoef{2}[][n]\). Without loss of generality, we assume that both \(\regcoef{1}[][n]\) and \(\regcoef{2}[][n]\) are non-negative. Since \(2\regcoef{1}[][n]\regcoef{2}[][n] \leq \regcoef{1}[][n]^2 + \regcoef{2}[][n]^2\), we have
\begin{equation} \label{eq:distancebound}
	K \leq \lim \frac{\regcoef{1}[][n]^2 + \regcoef{2}[][n]^2}{2 \sqrt{n^{-1}\left(n^{-1}+\regcoef{1}[][n]^2 + \regcoef{2}[][n]^2 \right) }} = \lim \frac{n\left( \regcoef{1}[][n]^2 + \regcoef{2}[][n]^2\right) }{2 \sqrt{1 + n \left(\regcoef{1}[][n]^2 + \regcoef{2}[][n]^2 \right) }}\fullstop
\end{equation}
When \(n\left( \regcoef{1}[][n]^2 + \regcoef{2}[][n]^2\right) \to 0\), then \(K = 0\). When \(n\left( \regcoef{1}[][n]^2 + \regcoef{2}[][n]^2\right) \to c\) then \(K \leq \frac{c}{\sqrt{1+c}}\), so for \(c\) smaller than the golden ratio \(\phi = \frac{1+\sqrt{5}}{2}\), then, by \eqref{eq:distancebound}, we have \(K < 1\). Similarly, it can be bounded below: When \(n\left( \regcoef{1}[][n]^2 + \regcoef{2}[][n]^2\right) \to \infty\), then \(K \geq \infty\), and when \(n\left( \regcoef{1}[][n]^2 + \regcoef{2}[][n]^2\right) \to c\) then \(K \geq \frac{c}{\sqrt{1+c}}\).

The limit probability \(L = \lim P_{\theta_n} \left( \filevent \right)\) can be characterized into regions according to Table~\ref{tbl:prodclassprob}, where
\(A = \max \left\lbrace
\lim n^{\delta}\regcoef{1}[][n]\regcoef{2}[][n],
\lim n^{\delta - 1/2} \sqrt{n^{-1} + \regcoef{1}[][n]^2 + \regcoef{2}[][n]^2}
\right\rbrace \):
\begin{table}[h]
	\centering
	\begin{tabular}{|c||c|c|c|}
		\hline
		& \(A = 0\) & \(0 < A < \infty\) & \(A = \infty\)\\
		\hline \hline
		\(\delta < 1\) & \(L = 1\) & \(0 < L < 1\) & \(L = 0\) \\
		\hline
		\(\delta = 1\) & --- & \(0 < L < 1\) & \(L = 0\)\\
		\hline
		\(\delta > 1\) & --- & --- & \multicolumn{1}{|c|}{\(L = 0\)} \\
		\hline
	\end{tabular}
	\caption{Classification into regions}
	\label{tbl:prodclassprob}
\end{table}

The classification in Table~\ref{tbl:prodclassprob} comes from approximating the statistic by its mean and standard deviation, 
\begin{equation} \label{eq:prodapprox}
	\product = \mean{\product} + \asymstocleq \left( \var{\product}^{1/2} \right) = \regcoef{1}[][n]\regcoef{2}[][n] + \asymstocleq \left( \sqrt{n^{-1}\left( n^{-1} + \regcoef{1}[][n]^2 + \regcoef{2}[][n]^2\right) } \right) \fullstop
\end{equation}

Explanation to the classification in the table may be provided as follows. When \(\delta < 1\), the statistic \(n^{\delta}\product\) might be bounded in high probability by the constant \(c\). When both \(\lim n^{\delta}\regcoef{1}[][n]\regcoef{2}[][n] = 0\) and \( \lim n^{\delta - 1/2}\sqrt{\regcoef{1}[][n]^2 + \regcoef{2}[][n]^2} = 0\), then \(\product \to 0\), which means \(L = 1\). The rate of convergence \(K\) directly influences the MSE-ratio.

When \(\delta > 1\), we have that \(L = 0\), for every rate of convergence to the parameters. Indeed, the estimator \(n^{\delta}\product\) will have to have standard deviation which converges to \(\infty\), so it is not possible for the estimator to be bounded in positive probability. When \(A = \infty\), for any value of \(\delta\), the same reasoning holds, since by definition either the mean or the standard deviation converges to \(\infty\) and cannot be bounded in positive probability, which means that the estimators are equivalent for every rate of convergence of the parameters.

When \(0<A<\infty\), then \(0<L<1\). It may happen for \(\delta = 1\), and then there is no possibility of the extreme cases \(K = 0\) and \(K = \infty\); When \(n^{1/2}\regcoef{1}[][n], n^{1/2}\regcoef{2}[][n]\to h \), \(0<h<\infty\), \(K\) also gets values in \(\left(0,\infty \right) \), and so as the MSE-ratio. When \(h = 0\), the MSE-ratio is also in \(\left(0,\infty \right) \), and \(h = \infty\) is not possible since it belongs to the case \(L = 0\).

Simulations of different MSE-ratios, as functions of \(L\) and \(K\), are shown in Figure~5.1. The filtration test in Figure (a) is \(\left| \product\right| <4n^{-0.7}\), in Figure (b) is \(\left| \product\right| <2.5n^{-1}\), and in Figure (c) is \(\left| \product\right| <2.5n^{-1.5}\). The true means \(\left(\regcoef{1}[][n], \regcoef{2}[][n] \right) \) for Figure (a) are (from the least value of \(K\) to the greatest) \(\left( n^{-0.5}, n^{-0.6}\right) \), \(\left( n^{-0.5}, n^{-0.5}\right) \), \(\left( 2n^{-0.5}, \sqrt{\frac{5}{3}}n^{-0.5}\right) \), \(\left( 2n^{-0.5}, 2n^{-0.5}\right) \) and \(\left( 2n^{-0.4}, n^{-0.4}\right) \). The true means for Figure (b) are (from the least value of \(K\) to the greatest) \(\left( 0.7n^{-0.5}, 0.7n^{-0.5}\right) \), \(\left( 1.075n^{-0.5}, 1.075n^{-0.5}\right) \),  and \(\left( 2n^{-0.5}, 2n^{-0.5}\right) \). The true means for Figure (c) are \(\left( n^{-1}, n^{-1}\right) \).

The intermediate case can be bounded theoretically, as shown in the Appendix.


\begin{figure}
	\centering
	\begin{subfigure}{0.3\textwidth}
		\includegraphics[width=\textwidth]{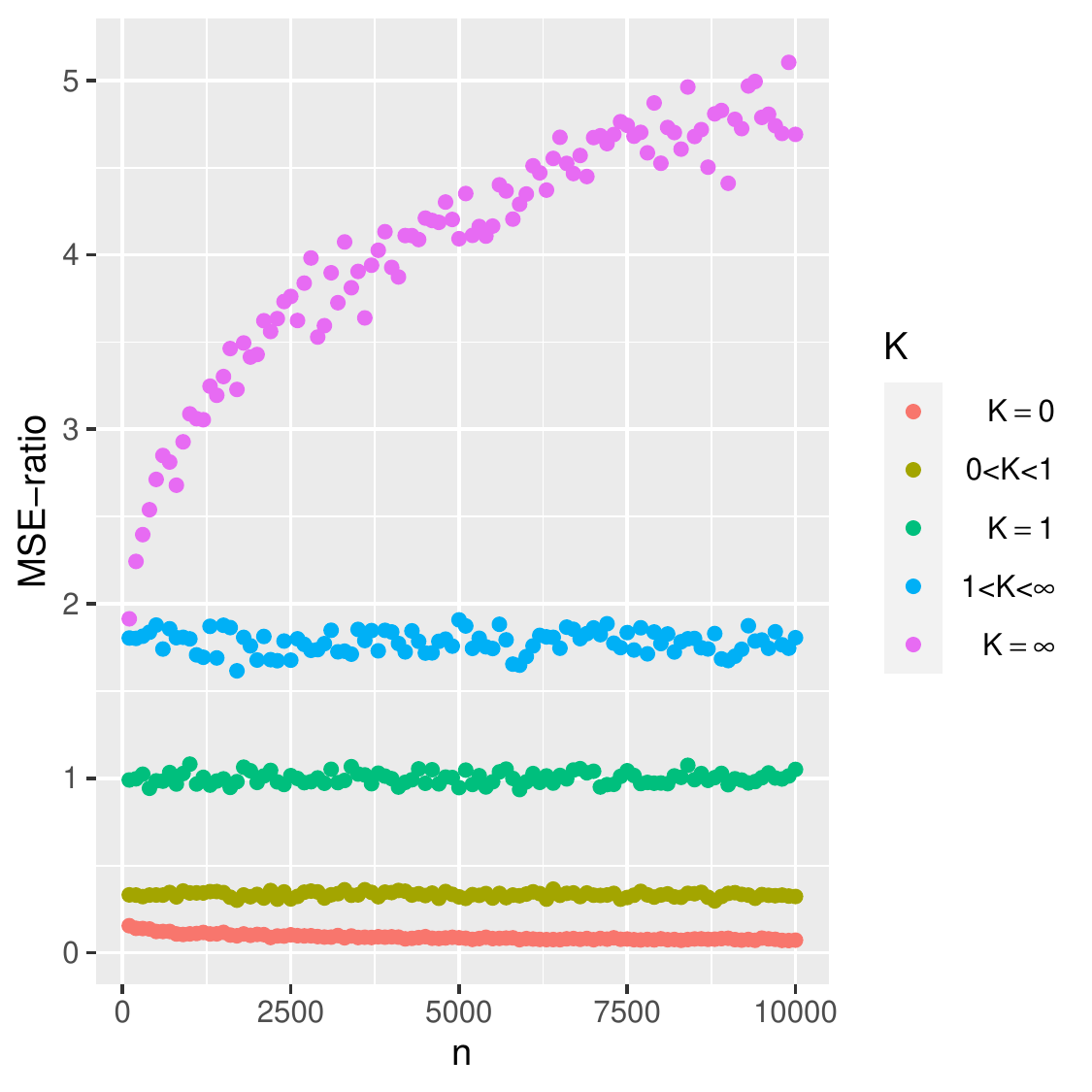}
		\caption{\(L = 1\)}
	\end{subfigure}
	\hfill
	\begin{subfigure}{0.3\textwidth}
		\includegraphics[width=\textwidth]{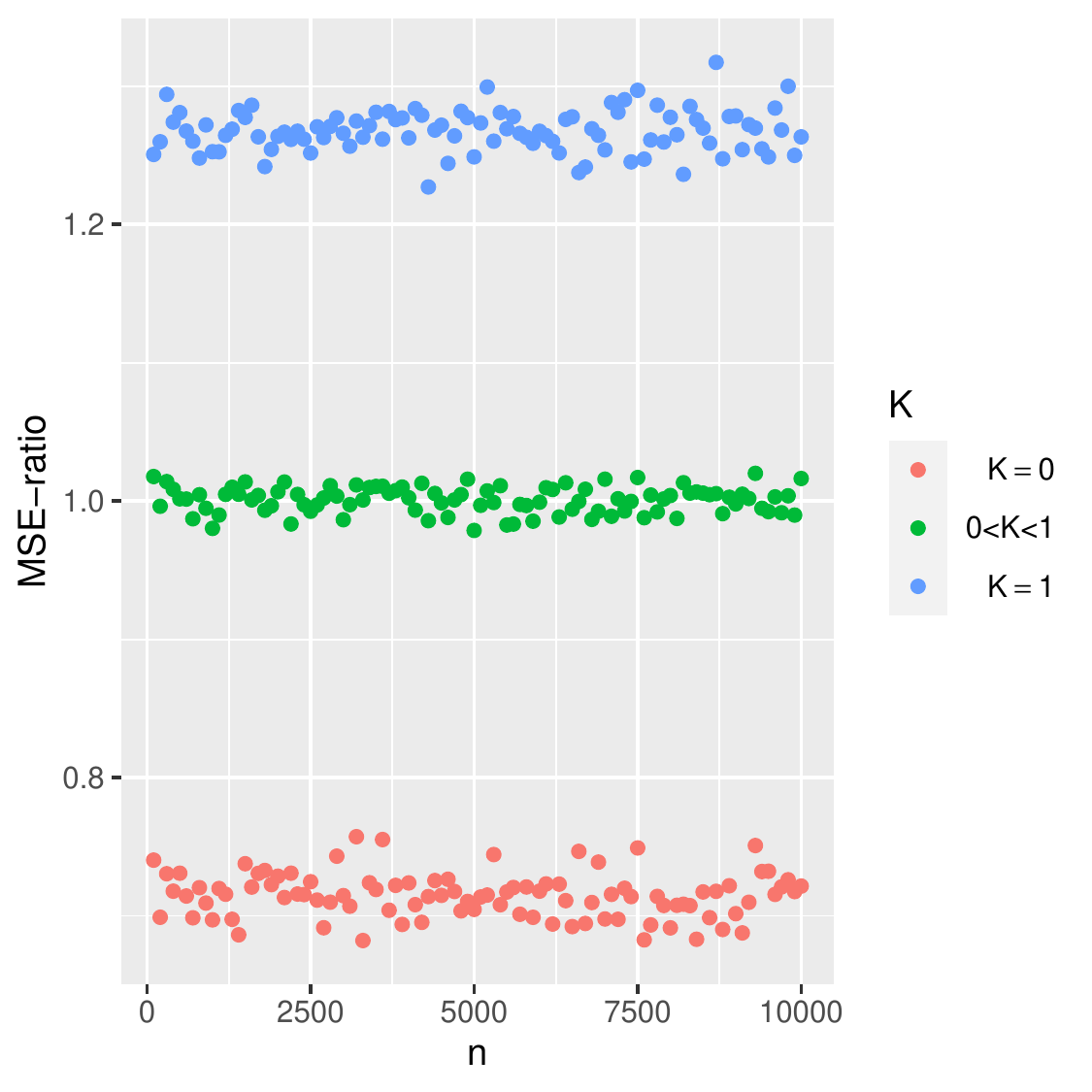}
		\caption{\(0 < L < 1\)}
	\end{subfigure}
	\hfill
	\begin{subfigure}{0.3\textwidth}
		\includegraphics[width=\textwidth]{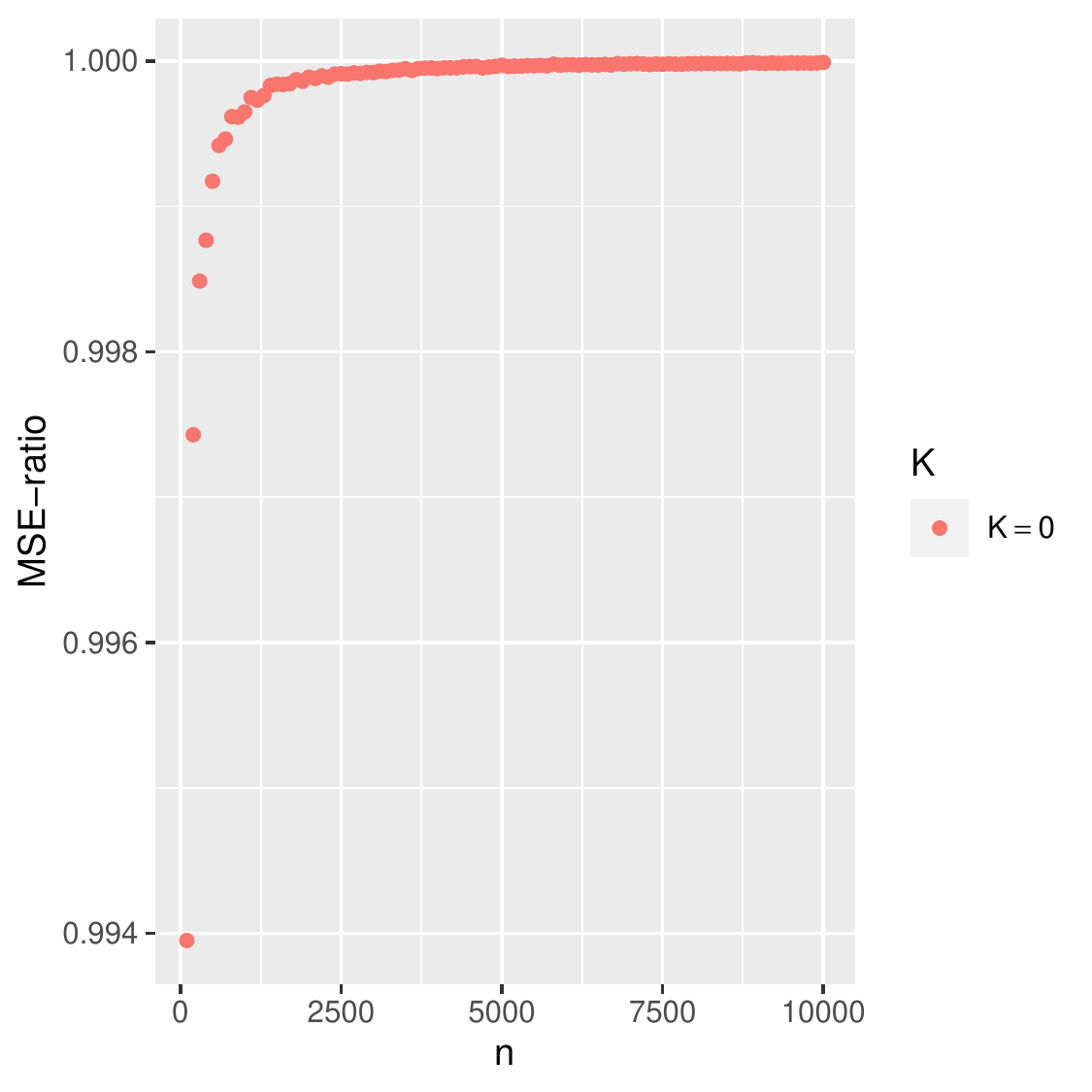}
		\caption{\(L = 0\)}
	\end{subfigure}
	\caption[MSE-ratio illustration]{Simulation of different MSE-ratios, as functions of \(L\) and \(K\).}  \label{fig:casestudycases}
\end{figure}

\subsection{Multiple Mediation Hypotheses} \label{sec:la-multmedhyp}

So far, we discussed the asymptotic behavior of shrinkage estimators under different parameter points. Now, we would like to return to the multiple-hypotheses framework. 

Considering alternatives as parameter points approaching to the null with the usual rate of \textrootn, i.e., the alternative has the form \(\theta = \theta_0 + \frac{h}{\sqrt{n}}\), where \(\theta_0\) is in the null. Hence, the parameter of interest is a re-scaled version of the original parameter, which can be rephrased as \(h = \sqrt{n}\left(\theta-\theta_0 \right) \).

Furthermore, the parameter space as a whole is re-scaled and gets the form \(H = \sqrt{n}\left(\Theta - \theta_0 \right) \). The notion of local parameter space, enables studying the asymptotic properties by capturing the local behavior around parameter points. When the parameter point \(\theta_0\) is in the interior of the space \(\Theta \subseteq \real^k\), the local space converges to whole \(\real^k\). Otherwise, the local space structure depends on the geometry of the parameter space at a vicinity of \(\theta_0\). 

When the null hypothesis consists of a unique interior point, the geometry of the local null space is trivial, yet the situation is still complicated. As discussed in \citet[p.~217]{van_der_vaart_asymptotic_1998}, \textit{``if \(\theta\) is of dimension \(k>1\), then there exists no uniformly most powerful test, not even among the unbiased tests. A variety of tests are reasonable, and whether a test is `good' depends on the alternatives at which we desire high power"}.

For composite hypotheses, it can be even more challenging.
There is need to consider the local null parameter spaces as well, namely \(H_0 = \sqrt{n}\left(\Theta_0 - \theta_0 \right) \) for points \(\theta_0 \in \Theta_0\). These local spaces can have different properties such as geometry and rates of convergence. When \(\theta_0\) is an inner point of \(\Theta_0\) in the usual Euclidean topology, then the local null space converges to~\(\real^k\). Otherwise, the geometry can be non-trivial and may affect the properties of the test in the limit, such as the distribution under the null.

When testing for mediation, the null hypothesis is composite, so the local parameter depends on the true point in the null. More formally, for the null point \(\left( 0,0 \right) \), the local space of null parameter points is composed of \(\Set{\left(h, 0\right) | h \in \real} \cup \Set{\left(0, h\right) | h \in \real}\), while the alternative consists \(\Set{\left(h_1, h_2\right) | h_1 h_2 \neq 0}\).
However, the local null space of \(\left(c, 0 \right) \), \(c\neq 0\), is \(\Set{\left(c + h, 0\right) | h \in \real}\), while the local alternative space is \(\Set{\left(c + h_1, h_2\right) | h_2 \neq 0}\).
These spaces are illustrated in Figure \ref{fig:localspaces}.
As a result, these different spaces shed light on the difficulty in estimation and testing for mediation.

\subsection{Controlling the FWER}
As a conclusion from the above discussion, considering null points as divided into the discrete cases does not take into account the local behavior of the points. Using this understanding, we provide two ways to bound the finite-sample FWER which take this understanding into account.

In the following we provide an upper bound on the FWER for the two-stage procedure presented in Section~\ref{sec:filt-alg}.  
For set of \(m\) hypotheses, we denote by \(f \in \left\lbrace 0, 1 \right\rbrace^m \) the vector whose \(i\)-th coordinate is \(\indc{\nfilevent}\). Let \(F = \sum_{i=1}^{m} f_i\).

\begin{theorem} \label{thm:fwercontrol}
	Assume that all \(m\) mediation hypotheses are independent.
	Assume that \(\theta_0 = \arg\min_{\theta}{P_\theta\left(\nfilevent \right) }\) (the parameter point with the lowest filtration probability is \(\theta_0\)). Assume also we use an adaptive adjusting method (after filtration step, and after computing \(F\), number of unfiltered hypotheses), such that under the null \(P_{\theta}\left(\cond{\baserejeventadj}{F} \right) = \frac{P_{\theta_0}\left( \nfilevent \right)}{F} \alpha \). For the special case that \(T\) is a p-value, it can be achieved for example by multiplying the rejection threshold by \(\frac{P_{\theta_0}\left( \nfilevent \right)}{F}\) (similarly to Bonferroni). Let \(V\) be the number of true null rejected. Then
	\(P\left(V \geq 1 \right) \leq \alpha \).
\end{theorem}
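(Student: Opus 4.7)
The plan is to bound the FWER by Markov's inequality applied to the number of false rejections, then compute that expectation by conditioning on the filtration step and invoking the prescribed rejection probability of the adjustment.

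First, let $I_0\subseteq\{1,\ldots,m\}$ index the true null hypotheses, and for each $i$ write $A_i$ for the unfiltration event $\nfilevent[i]$ and $B_i$ for the adjusted base-rejection event $\baserejeventadj[i]$. A false rejection of hypothesis $i\in I_0$ occurs exactly on $A_i\cap B_i$, so $V=\sum_{i\in I_0}\indc{A_i}\indc{B_i}$ and Markov's inequality gives
\[
P_\theta(V\geq 1) \;\leq\; E_\theta[V] \;=\; \sum_{i\in I_0} E_\theta\!\left[\indc{A_i}\indc{B_i}\right].
\]

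Next, I would condition on the filtration step. By the theorem's hypothesis on the adjustment, on $\{F\geq 1\}$ the conditional probability of $B_i$ equals $P_{\theta_0}(\nfilevent)\,\alpha/F$, while on $\{F=0\}$ every $\indc{A_i}=0$ so the corresponding summand vanishes. Since $\indc{A_i}$ is measurable with respect to the filtration step, it can be pulled outside the conditional expectation, and summing over $i\in I_0$ using $\sum_{i\in I_0}\indc{A_i}\leq \sum_{i=1}^m\indc{A_i}=F$ cancels the random factor $F$:
\[
\sum_{i\in I_0} E_\theta\!\left[\indc{A_i}\indc{B_i}\right]
\;\leq\; E_\theta\!\left[F\cdot\frac{P_{\theta_0}(\nfilevent)\,\alpha}{F}\,\indc{F\geq 1}\right]
\;=\; P_{\theta_0}(\nfilevent)\,\alpha\,P_\theta(F\geq 1)\;\leq\;\alpha,
\]
where the last inequality uses $P_{\theta_0}(\nfilevent)\leq 1$ and $P_\theta(F\geq 1)\leq 1$.

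The main obstacle is justifying that the adjustment hypothesis, which is stated as a conditional probability given $F$, can be applied while $\indc{A_i}$ sits inside the outer expectation. This effectively requires the base test statistic of hypothesis $i$ to be independent of $\indc{A_i}$ as well as of $\{\indc{A_j}:j\neq i\}$. Independence across the $m$ hypotheses supplies the second condition, while the first is automatic in the $p$-value special case singled out in the theorem (where the rejection event is $\{p_i\leq P_{\theta_0}(\nfilevent)\alpha/F\}$ with $p_i\sim\contunifdist$ under the null and independent of $F$) and requires an additional independent-filtering assumption in the spirit of \citet{bourgon_independent_2010} in general. The argmin defining $\theta_0$ enters only implicitly, by ensuring that the prescribed threshold is the most conservative choice so that the adjustment hypothesis is valid under every null parameter~$\theta$.
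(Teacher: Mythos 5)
Your overall skeleton---a first-moment/union bound on $V$ over the unfiltered true nulls followed by an application of the prescribed adjustment---matches the paper's, but the step you yourself flag as ``the main obstacle'' is exactly where the paper's argument differs, and your way of resolving it imports an assumption the theorem does not make and that fails for the intended statistics. You need $P_\theta\left(\cond{\baserejeventadj[i]}{\nfilevent[i], F}\right)$, whereas the hypothesis only supplies $P_\theta\left(\cond{\baserejeventadj[i]}{F}\right)=\frac{P_{\theta_0}\left(\nfilevent\right)}{F}\alpha$; you bridge the gap by positing that the base rejection event is (conditionally) independent of the hypothesis' own filtration event. In the mediation application both statistics are built from the same pair $\left(\estim{1},\estim{2}\right)$---e.g.\ $\basestat=\maxregpval$ with $\filtstat=\minregpval$ or $\filtstat=\product$---so that independence is false, and the theorem is engineered precisely so as not to need it.

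The paper's proof instead conditions on the filtration vector $f$, reads the adjustment assumption as conditional on $\sum_{j\neq i}f_j$ (so that the adjusted region is independent of the $i$-th hypothesis' own data), and for a true null with $f_i=1$ bounds
\[
P_\theta\left(\cond{\baserejeventadj[i]}{\nfilevent[i],\ \sum_{j\neq i}f_j}\right)
\;\leq\;
\frac{P_\theta\left(\cond{\baserejeventadj[i]}{\sum_{j\neq i}f_j}\right)}{P_\theta\left(\nfilevent[i]\right)}
\;=\;
\frac{P_{\theta_0}\left(\nfilevent\right)}{F\,P_\theta\left(\nfilevent\right)}\,\alpha \comma
\]
using independence \emph{across} hypotheses only to remove the conditioning from the denominator. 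Summing over the at most $F$ unfiltered true nulls leaves $\alpha\, P_{\theta_0}\left(\nfilevent\right)/P_\theta\left(\nfilevent\right)\leq\alpha$, and this final inequality is where $\theta_0=\arg\min_\theta P_\theta\left(\nfilevent\right)$ is actually used. So the arg-min condition is not, as you suggest, an implicit guarantee that the threshold is conservative; together with the deliberate deflation factor $P_{\theta_0}\left(\nfilevent\right)$ in the adjustment, it is the device that absorbs the worst-case dependence between $\basestat[i]$ and $\filtstat[i]$. Under your added independence assumption that factor is superfluous (your final bound is $P_{\theta_0}\left(\nfilevent\right)\alpha$, strictly below $\alpha$), which signals that the proof is not engaging with the real difficulty. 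To repair your argument within the stated hypotheses, replace the pulled-out conditional probability by the ratio bound above.
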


The following result generalizes the FWER bound given by Proposition~1 of \citet{djordjilovic_optimal_2020} for the two-stage screen-min procedure, in which \citet{djordjilovic_optimal_2020} assumed discrete separation of the null into cases (Denoted here as Cases \nullcase{00}, \nullcase{01} and \nullcase{10}).

\begin{theorem} \label{thm:fwerbound}
	Assume that all \(m\) mediation hypotheses are independent.
	Assume we use an adaptive adjusting method (after filtration step, and after computing \(F\), number of unfiltered hypotheses), such that under the null \(P_{\theta}\left(\baserejeventadj \right) = \alpha/F \). For the special case that \(T\) is a p-value, it can be achieved for example by multiplying the rejection threshold by \(1/F\) (similarly to Bonferroni). Let \(V\) be the number of true null rejected. Then
	\[P\left(V \geq 1 \right) \leq \mean{\left(1 - \left( 1 - \max_{\theta\in\Theta }P_\theta \left(\cond{T^i \in R^i}{S^i \in Q^i}\right) \right)^{F}  \right)\indc{F>0} } \fullstop \]
\end{theorem}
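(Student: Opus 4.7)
My approach is to bound $P(V \geq 1)$ by first conditioning on the outcome of the filtration step and then exploiting the cross-hypothesis independence. Let $f = (f_1,\ldots,f_m) \in \{0,1\}^m$ with $f_i = \indc{\nfilevent[i]}$, so that $F = \sum_i f_i$, the unfiltered set $\hypset = \{i : f_i = 1\}$, and each adjusted rejection region $\baserradj[i]$ are all measurable with respect to $f$ (by the stated adaptive scheme, which depends on the data only through $F$). Since a rejection is possible only for unfiltered hypotheses, $\{V\geq 1\}\subseteq\{F\geq 1\}$, and therefore
\[
P(V \geq 1) = \mean{P(V \geq 1 \mid f) \, \indc{F>0}} ,
\]
which already produces the factor $\indc{F>0}$ in the target bound.

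The second step is to compute $P(V = 0 \mid f)$ using independence across hypotheses. Conditional on $f$, the base-test statistics $\{\basestat[i]\}_{i \in \hypset}$ are mutually independent (the $m$ hypotheses are independent by assumption, and $f$ is obtained by applying the per-hypothesis filtration rule within each hypothesis). Letting $\mathcal{T}$ denote the (fixed, unknown) set of true nulls, this yields
\[
P(V = 0 \mid f) = \prod_{i \in \hypset \cap \mathcal{T}} \bigl(1 - P_{\theta^i}(\baserejeventadj[i] \mid \nfilevent[i], F)\bigr).
\]

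The third step is a uniform-in-$\theta$ bound on each factor: replace the $\theta^i$-specific conditional rejection probability by $p := \max_{\theta \in \Theta} P_\theta(\basestat[i] \in \baserr[i] \mid \nfilevent[i])$. This is valid because (a) the adaptive adjustment tightens the rejection region, so $\baserejeventadj[i] \Rightarrow \baserejevent[i]$; and (b) the maximum is taken over all of $\Theta$, not only null parameters. Combined with $|\hypset \cap \mathcal{T}| \leq F$ and $p \in [0,1]$, this gives
\[
P(V = 0 \mid f) \geq (1-p)^{|\hypset \cap \mathcal{T}|} \geq (1-p)^F ,
\]
so that $P(V \geq 1 \mid f) \leq 1 - (1-p)^F$ on $\{F > 0\}$. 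Taking expectations over $f$ delivers the stated bound.

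The step that requires the most care is the conditional-independence argument at the heart of the second step: one has to verify that after conditioning on the entire vector $f$ (not just on $F$), the base-test statistics of different hypotheses retain their independence, and that the conditional distribution of $\basestat[i]$ given $f$ coincides with its distribution given $\nfilevent[i]$ alone, so that no additional information leaks in through the other coordinates of $f$ or through $F$. Both facts follow from the independence of the $m$ hypotheses together with the $f$-measurability of the adjusted regions, but they should be spelled out carefully. A secondary subtlety is the inequality $\baserejeventadj[i] \Rightarrow \baserejevent[i]$, which depends on the adjustment being no more liberal than the original rejection region; this holds under the Bonferroni-type scheme described in the theorem since the adjusted size $\alpha/F$ is at most $\alpha$.
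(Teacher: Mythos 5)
Your proof is correct and follows essentially the same route as the paper's: condition on the filtration vector $f$, factor $P(V=0\mid f)$ across the unfiltered hypotheses using independence, bound each factor by the worst-case conditional rejection probability, and take expectation over $f$. If anything, you are slightly more careful than the paper at two points it glosses over --- restricting the product to true nulls via $\hypset\cap\mathcal{T}$ rather than writing an equality over all unfiltered indices, and justifying the passage from the adjusted region $\baserradj[i]$ in the argument to the unadjusted $\baserr[i]$ appearing in the stated bound.
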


As stated, hypotheses with filtration probability \(0\) are not altered by the filtration stage, thus remain with the same (wanted) behavior of the original, base test. Other hypotheses may be filtered, depending on the filtration stage.

The advantage of the first theorem is that it is possible, at least numerically, to calculate \(P_{\theta_0}\left( \nfilevent \right)\), thus the bound can easily be used. While the bound in the second theorem may be less conservative, it is more difficult to explicitly calculate it, as the bound involves both calculating \(\max_{\theta\in\Theta }P_\theta \left(\cond{T^i \in R^i}{S^i \in Q^i}\right)\) and taking expectation. Relative to a fixed alternative, both bounds can be used to optimize the power while controlling the FWER. As discussed in Section 5.3, the challenge is more complicated since the alternatives are unknown. Given a prior over both null and alternative parameter points, one can optimize the tests to achieve better bounds, see, for example, discussion in \citet{djordjilovic_optimal_2020}, Section 4. Specifically, while the stated theorems concern controlling the finite-sample FWER, \(L\) and \(K\) in Theorem~\ref{thm:effclass} with the local asymptotic framework can be used to achieve better bounds for the asymptotic FWER. These are important questions, which deserve future research.

\begin{figure}
	
	\centering
	\begin{tikzpicture}
		\fill[yellow!20!white] (0,0) circle [radius=1cm]
		                       (3,0) circle [radius=1cm];
		\fill[yellow!20!white] (0,3) circle [radius=1cm];
		
		\draw[->, very thick] (-0.75,0) -- (0.75,0);
		\draw[->, very thick] (0,-0.75) -- (0,0.75);
		
		\draw[->, very thick] (2.25,0) -- (3.75,0);
		
		\draw[->, very thick] (0,2.25) -- (0,3.75);
		
		\filldraw (1.25,0) circle (1pt)
		(1.5,0) circle (1pt)
		(1.75,0) circle (1pt);
		
		\filldraw (0,1.25) circle (1pt)
		(0,1.5) circle (1pt)
		(0,1.75) circle (1pt);
		
		\coordinate (A) at (3.5, 0.5);
		
		\filldraw (3,0) circle (2pt);
		\filldraw[blue] (A) circle (2pt);
		
		\node[anchor=west, blue] at (A) {\(\left( c+\frac{h_1}{\sqrt{n}}, \frac{h_2}{\sqrt{n}}\right)\)};
		\draw[->, thick, dotted, blue] (A) .. controls (3.5, 0) and (3, 0.5) .. (3, 0.1);
		
		\coordinate (B) at (0.5, 3.5);
		
		\filldraw (0,3) circle (2pt);
		\filldraw[blue] (B) circle (2pt);
		
		\node[anchor=west, blue] at (B) {\(\left( \frac{h_1}{\sqrt{n}}, c+ \frac{h_2}{\sqrt{n}}\right)\)};
		\draw[->, thick, dotted, blue] (B) .. controls (0,3.5) and (0.5,3) .. (0.1,3);
		
		\coordinate (C) at (-0.5, -0.5);
		
		\filldraw (0,0) circle (2pt);
		\filldraw[blue] (C) circle (2pt);
		
		\node[anchor=east, blue] at (C) {\(\left( \frac{h_1}{\sqrt{n}}, \frac{h_2}{\sqrt{n}}\right)\)};
		\draw[->, thick, dotted, blue] (C) --
		(-0.1,-0.1);
	\end{tikzpicture}
	\caption[Local spaces for the mediation hypothesis]{Illustration of local spaces (in yellow) arise from the mediation hypothesis. In black: the corresponding local null spaces; in blue: examples for local alternatives}
	\label{fig:localspaces}
\end{figure}
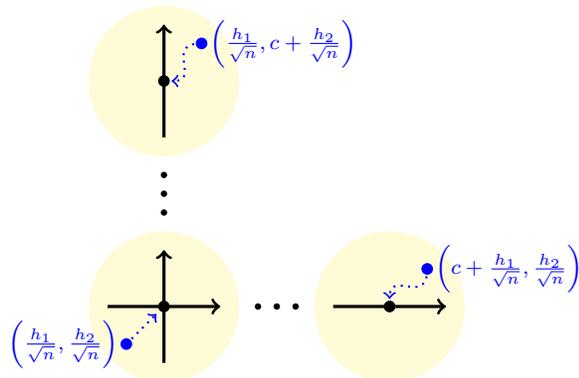

In some settings, one can explicitly show that the two-stage procedure controls the FWER.
Assume that we use an adaptive adjusting method (after filtration step, and after computing \(F\), number of unfiltered hypotheses), such that under the null \(P_{\theta}\left(\baserejeventadj \right) = \alpha/F \). For the special case that \(T\) is a p-value, it can be achieved for example by multiplying the rejection threshold by \(1/F\) (similarly to Bonferroni).
The following example, inspired by \citet{huang_genome-wide_2019}, is such an example.  
\begin{example}
    Assume that statistics follow the normal distribution with \(\estim{1} \sim \nordist[\regcoef{1}[][n]][n^{-1/2} \sigma]\) and \(\estim{2} \sim \nordist[\regcoef{2}[][n]][n^{-1/2} \sigma]\), for some constant \(\sigma\), and the means are distributed as follows:
    \begin{itemize}
        \item With probability \(\pi_0\) (Case~\nullcase{00}), \(\regcoef{1}[][n] = \regcoef{2}[][n] = 0\).
        \item With probability \(\pi_1\) (Case~\nullcase{10}), \(\regcoef{1}[][n] \sim \nordist[1 + n^{-1/2}][n^{-1/2}]\) and \(\regcoef{2}[][n] = 0\).
        \item With probability \(\pi_2\) (the alternative), \(\regcoef{1}[][n] \sim \nordist[1 + n^{-1/2}][n^{-1/2}]\) and \(\regcoef{2}[][n] \sim \nordist[n^{-1/2}][n^{-1/2}]\).
    \end{itemize}
    Here \(\pi_0 + \pi_1 + \pi_2 = 1\) are unknown. 
    Let us have \(m\) true null hypotheses, tested with the two-stage procedure, where an hypothesis is filtered (at the first stage) if \(\left|\estim{1} \estim{2}\right| < cn^{-\delta}\), for some predefined \(c > 0\), \(\frac{1}{2} < \delta < 1\).
    In this scenario, with adjusting as proposed in Theorem~\ref{thm:fwerbound},
    \begin{align*}
        P\left( V \geq 1 \right)
        &\leq \sum_{i=1}^{m} P\left( \baserejeventadj[i] \andtext \nfilevent[i]  \right) \\
        &= \sum_{i=1}^{m} \big(\frac{\pi_0}{\pi_0 + \pi_1} P\left( \cond{\baserejeventadj[i] \andtext \nfilevent[i]}{\text{Case~\nullcase{00}}} \right) \\
        &\quad+ \frac{\pi_1}{\pi_0 + \pi_1} P\left( \cond{\baserejeventadj[i] \andtext \nfilevent[i]}{\text{Case~\nullcase{10}}} \right) \big)\\
        &= \sum_{i=1}^{m} \frac{\pi_1}{\pi_0 + \pi_1} P\left( \cond{\baserejeventadj[i]}{\text{Case~\nullcase{10}}} \right) \fullstop
    \end{align*}
    Now, since \(F \to \frac{\pi_1}{\pi_0 + \pi_1}\), and \(P\left( \baserejeventadj[i]\right) = \frac{1}{F} P\left( \baserejevent[i]\right) \), we have FWER no more than the predefined level \(P\left( \baserejevent[i]\right) = \alpha\).
\end{example}


\section{Applications to Multiple Comparison} \label{appmc}

Consider the multiple tests framework discussed in Sections \ref{sec:filt-alg} and \ref{sec:la-multmedhyp}. In this section we aim to compare the proposed two-stage procedure through simulations. We present three settings. The first is similar to the setting of \citet{djordjilovic_optimal_2020}. We simulate the estimators \(\estim{1}\) and \(\estim{2}\) for the regression coefficients in Section \ref{sec:rates-framework}, from normal distribution with means \(\regcoef{1}[][n]\) and \(\regcoef{2}[][n]\) and standard deviations \(\frac{1}{\sqrt{n}} \sigma\).
The true means \(\regcoef{1}[][n]\) and \(\regcoef{2}[][n]\) are distributed as detailed in the configurations columns in Table \ref{tbl:param}.
\begin{table}[ht]
\centering
\begin{tabular}{rlllll}
  \hline
 & \(\gamma_n\) & \(\beta_n\) & Configuration 1 & Configuration 2 & Configuration 3 \\ 
  \hline
1 & \(0\) & \(0\) & \(65\%\) & \(25\%\) & \(25\%\) \\ 
  2 & \(3 n^{-3/4}\) & \(0\) & \(0\%\) & \(15\%\) & \(0\%\) \\ 
  3 & \(3 n^{-1/2}\) & \(0\) & \(30\%\) & \(25\%\) & \(35\%\) \\ 
  4 & \(3 n^{-1/3}\) & \(0\) & \(0\%\) & \(10\%\) & \(0\%\) \\ 
  5 & \(1 + 3 n^{-1/2}\) & \(0\) & \(0\%\) & \(15\%\) & \(15\%\) \\ 
  6 & \(3 n^{-1/2}\) & \(3 n^{-1/2}\) & \(5\%\) & \(4\%\) & \(0\%\) \\ 
  7 & \(3 n^{-1/3}\) & \(3 n^{-1/2}\) & \(0\%\) & \(3\%\) & \(0\%\) \\ 
  8 & \(1 + 3 n^{-1/2}\) & \(3 n^{-1/2}\) & \(0\%\) & \(3\%\) & \(10\%\) \\ 
   \hline
\end{tabular}
\caption{Parameters} 
\label{tbl:param}
\end{table}

Following Algorithm~\ref{alg:filtration}, we first use filtration tests. We compare numerous filtration methods, where the filtering event \(A\) is each of the following:
\begin{itemize}
	\item \(A = \left[\min\left\lbrace \regpval{1}, \regpval{2} \right\rbrace \geq 0.0004\right] \)
	\item \(A = \left[ F_{\chi_2^2}^{-1}\left( \estim{1}^2 + \estim{2}^2\right) \geq 0.001 \right] \), where \(F_{\chi_2^2}\) is the \(\chi_2^2\)-distribution CDF.
	\item \(A = \left[ \product < cn^{-\delta}\right] \), for \(\delta = 0.8, 0.9 \andtext 1\) and \(c = 1.2, 2, 3\) respectively, similar to the filtration used in the Case Study in Section \ref{sec:la-casestudy}.
\end{itemize}
We retain each of the hypotheses that its filtration event \(A\) is true. We are then left with \(F\) hypotheses that were not filtered. In all methods, we then compute for every remaining hypothesis another \pvalue{}, \(\max\left\lbrace \regpval{1}, \regpval{2} \right\rbrace\) for the original mediation hypotheses, as in \eqref{eq:medhypmulti}. We adjust the remaining \pvalue{}s with Bonferroni method, and then reject hypotheses whose \pvalue{}s are below a certain threshold. In each test there are 200 hypotheses, and we repeated the simulation 500 times. The FWER bounds in Theorems \ref{thm:fwercontrol} and \ref{thm:fwerbound} are conservative, and following \citet{djordjilovic_optimal_2020}, we conducted the base test with Bonferroni correction, without considering the filtration probability.

In all configurations, the only change is the filtration \pvalue{}, and the base test \pvalue{}s and adjusting method are the same (beside the exact adjustment performed, which depend on the set of remaining hypotheses, after the filtration). In Figure~\ref{fig:simulatedfwerpower} are shown the FWER and the power of the different two-stage methods. An R package that implements the different testing procedures and the examples presented above can be found at \url{https://github.com/yotamleibovici/twostageshrink}.

%

\begin{figure}
	\centering
	\includegraphics[width=\textwidth]{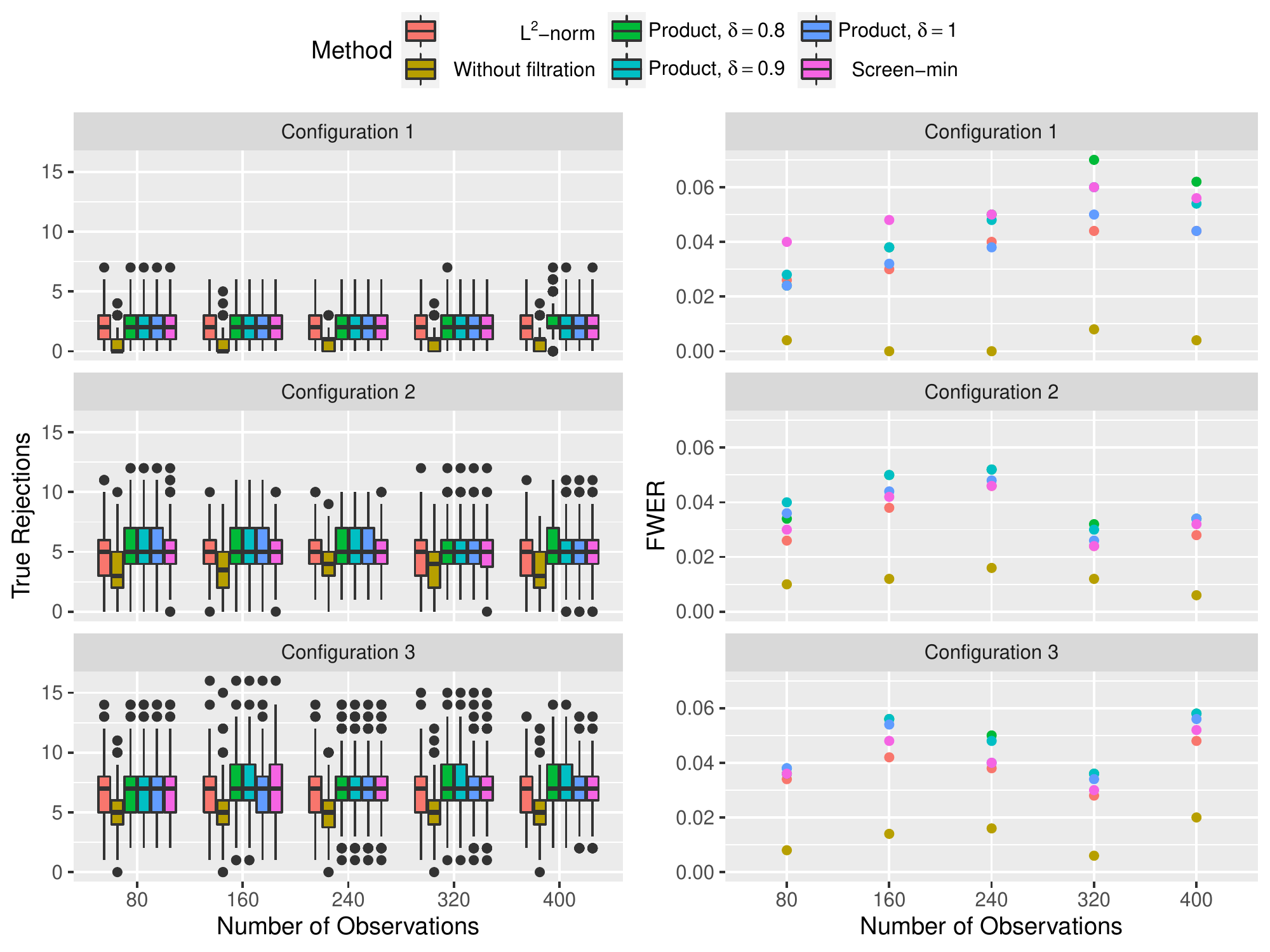}
	\caption[Empirical FWER and power]{Empirical FWER and Power of the two-stage procedure, with various filtration tests, for the multiple mediation hypotheses framework. Simulated under three different settings, as detailed in Section \ref{appmc}.}
	\label{fig:simulatedfwerpower}
\end{figure}

In all methods the simulated FWER was approximately under 0.05. In both configurations, there was a significant improvement of the filration procedures compared to the procedure without filtering at all. In Configuration 2, there is an improvement of the product methods with limit probability 1 (i.e., \(\delta = 0.8 \andtext 0.9\)) above the rest of the filtration procedures, that are all have similar results in terms of power.

\section{Concluding Remarks}
In this work, we considered the mediation problem arises often and in large numbers in genome-wide research, and the power-loss challenge involves it. We examined the two-stage approach for testing the mediation problem, which includes a filtration test prior to some base test. We linked the two-stage procedure to shrinkage estimators, and studied their properties in the local asymptotics framework, which is essential in this context of composite hypotheses with filtration. We also stated result concerning FWER control for the multiple hypotheses framework, using similar terminology from the point-estimation framework. We then demonstrated the theoretical results about estimators through a case study, and performed simulation study for the multiple hypotheses framework, which shows superiority of the filtration methods, and for some scenarios, of some of them on the others.




	

\begin{appendix}
\section*{Proofs} \label{sec:proofs}

\begin{proof}[Proof of Lemma~\ref{lem:irregularity}]
	For Sobel's test statistic \(\frac{\product}{\sqrt{\sigma_{\regcoef{2}}^2 \estim{1}^2 + \sigma_{\regcoef{1}}^2 \estim{2}^2}}\), let \(\regcoef{1} = \frac{h_{\regcoef{1}}}{\sqrt{n}}\) and \(\regcoef{2} = \frac{h_{\regcoef{2}}}{\sqrt{n}}\), and consider it as an estimator for \(\frac{\regcoef{1}\regcoef{2}}{\sqrt{\sigma_{\regcoef{2}}^2 \regcoef{1}^2 + \sigma_{\regcoef{1}}^2 \regcoef{2}^2}}\). Now,
	\begin{align*}
	\sqrt{n} \left( \frac{\product}{\sqrt{\sigma_{\regcoef{2}}^2 \estim{1}^2 + \sigma_{\regcoef{1}}^2 \estim{2}^2}} - \frac{\regcoef{1}\regcoef{2}}{\sqrt{\sigma_{\regcoef{2}}^2 \regcoef{1}^2 + \sigma_{\regcoef{1}}^2 \regcoef{2}^2}} \right)
	=  \frac{\sqrt{n}\estim{1}\sqrt{n}\estim{2}}{\sqrt{\sigma_{\regcoef{2}}^2 n \estim{1}^2 + \sigma_{\regcoef{1}}^2 n \estim{2}^2}} - \frac{h_{\regcoef{1}}h_{\regcoef{2}}}{\sqrt{\sigma_{\regcoef{2}}^2 h_{\regcoef{1}}^2 + \sigma_{\regcoef{1}}^2 h_{\regcoef{2}}^2}} \\
	\to
	\frac{%
	\left( h_{\regcoef{1}} + Z_{\regcoef{1}} \right)
	\left( h_{\regcoef{2}} + Z_{\regcoef{2}} \right)
	}{%
	\sqrt{\sigma_{\regcoef{2}}^2 \left( h_{\regcoef{1}} + Z_{\regcoef{1}} \right)^2 +
	\sigma_{\regcoef{1}}^2 \left( h_{\regcoef{1}} + Z_{\regcoef{1}} \right)^2}
	} - \frac{h_{\regcoef{1}}h_{\regcoef{2}}}{\sqrt{\sigma_{\regcoef{2}}^2 h_{\regcoef{1}}^2 + \sigma_{\regcoef{1}}^2 h_{\regcoef{2}}^2}} \comma
	\end{align*}
	For \(Z_{\regcoef{1}}\) and \(Z_{\regcoef{2}}\) independent standard normal random variables. Clearly, for different values of \(\left(h_{\regcoef{1}}, h_{\regcoef{1}} \right) \), for instance for \(\left(0,0 \right) \) (which represent a constant series at the origin) and \(\left(1,0 \right) \) (which represent converging from the positive side of the \(x\)-axis to the origin), the distributions obtained are different. Similar arguments can be applied also for \(\sqrt{\estim{1}^2 + \estim{2}^2}\).
\end{proof}

	\begin{lemma}
	Let \(X_n \geq 0\) be a sequence of non-negative uniformly integrable random variables, and \(A_n\) sets such that \(P\left(A_n \right) \to L \), \(0<L<1\). Then \[0 < \liminf \mean{X_n \indc*{A_n}} <
	\limsup \mean{X_n \indc*{A_n}} < 1\]
\end{lemma}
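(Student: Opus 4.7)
The strategy is to exploit uniform integrability to reduce the problem to a bounded truncation, after which the strict separation $0 < L < 1$ of the limiting filter probability forces the required strict separation of the expected values. Recall the standard characterization of uniform integrability: for every $\varepsilon > 0$ there exists $M > 0$ such that $\sup_n \mean{X_n \indc{X_n > M}} < \varepsilon$. I will fix such $M$ and work with the decomposition
\begin{equation*}
\mean{X_n \indc*{A_n}} = \mean{(X_n \wedge M)\indc*{A_n}} + \mean{(X_n - M)_+ \indc*{A_n}}\comma
\end{equation*}
where the last summand is bounded by $\varepsilon$ uniformly in $n$.

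For the upper bound on $\limsup \mean{X_n \indc*{A_n}}$, the truncated term satisfies $\mean{(X_n \wedge M)\indc*{A_n}} \le M\,P(A_n) \to M L$. Combining this with the decomposition
\begin{equation*}
\mean{X_n} = \mean{X_n \indc*{A_n}} + \mean{X_n \indc*{A_n^c}}
\end{equation*}
and a symmetric \emph{lower} bound for the complementary term (obtained by rerunning the argument below with $A_n^c$ in place of $A_n$, using that $P(A_n^c) \to 1-L > 0$), gives the desired strict upper bound once $\varepsilon$ is chosen small enough relative to the gap produced by $1 - L$.

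For the lower bound $\liminf \mean{X_n \indc*{A_n}} > 0$, I will argue by contradiction. If the liminf vanished, a subsequence would give $\mean{X_n \indc*{A_n}} \to 0$, forcing $X_n \indc*{A_n} \to 0$ in $\mathscr{L}^1$ and therefore in probability. Uniform integrability combined with $P(A_n) \to L > 0$ then contradicts the non-degeneracy of the sequence $X_n$ on sets of probability bounded below by $L$; the truncation argument makes this precise by showing that on $A_n \cap \{X_n \le M\}$ the mass concentrates only if $P(A_n) \to 0$.

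The strict gap $\liminf < \limsup$ is the most delicate point: it does not follow from the bounds above if $\mean{X_n \indc*{A_n}}$ happens to converge, so this portion must use that either $X_n$ or $A_n$ fluctuates asymptotically---a condition implicit in the role the lemma plays in the intermediate-regime case $0 < L < 1$ of Section~\ref{sec:la-casestudy}, where the shrinkage indicator oscillates between its bounded and unbounded limiting shapes. The main obstacle I expect is making this oscillation rigorous without additional structural hypotheses; the cleanest path is to extract two subsequences along which $X_n \indc*{A_n}$ behaves differently (for instance, one where $A_n$ aligns with $\{X_n \le M\}$ and one where it does not), exploiting the fact that $L \in (0,1)$ leaves room for both alignments to occur cofinally.
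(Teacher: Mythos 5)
Your proposal correctly isolates the two real difficulties in this lemma, but it overcomes neither of them, and in both cases the obstruction is that the statement as given omits hypotheses that any proof must use. For the lower bound, your contradiction argument (``if \(\liminf \mean{X_n \indc*{A_n}} = 0\) then \(X_n\indc*{A_n} \to 0\) in probability along a subsequence, contradicting non-degeneracy'') appeals to a non-degeneracy of \(X_n\) that is nowhere among the hypotheses: if \(X_n \equiv 0\) the conclusion is simply false. The paper's own proof has the same structure---it extracts \(B_n \subset A_n\) with \(P\left(B_n\right) = L - \varepsilon\), deduces that \(P\left(X_{m_n} < \varepsilon\right)\) stays above roughly \(L - \varepsilon\), and then declares this ``in contradiction to the convergence to normal distribution''---i.e.\ it silently imports the fact that, in the application, \(X_n = \sigma_{T_n}^{-2}\left(T_n - \psi\left(\theta_n\right)\right)^2\) converges in distribution to a non-degenerate limit with no atom at \(0\). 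So your lower-bound argument is essentially the paper's idea and suffers from the same missing hypothesis; to make it rigorous you would need to assume something like \(X_n \convdist X\) with \(P\left(X < \delta\right)\) small for small \(\delta\).

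The upper bound and the middle inequality are where your proposal genuinely fails rather than merely sharing the paper's gap. The bound \(\mean{\left(X_n \wedge M\right)\indc*{A_n}} \le M\,P\left(A_n\right) \to ML\) cannot yield \(\limsup < 1\): the truncation level \(M\) is dictated by uniform integrability and may be arbitrarily large, and nothing in the hypotheses bounds \(\mean{X_n}\) at all. Indeed \(X_n \equiv 4\) with \(P\left(A_n\right) = 1/2\) satisfies every hypothesis and gives \(\mean{X_n\indc*{A_n}} = 2\), so the claimed upper bound is false as stated. What is actually needed, and what the intended application supplies, is \(\mean{X_n} \to 1\) together with a strictly positive asymptotic contribution from \(A_n^c\) (again via non-degeneracy on a set of limiting probability \(1-L>0\)); this is exactly the subtraction \(\lim\mean{X_n\indc*{A_n}} = \lim\mean{X_n} - \lim\mean{X_n\indc*{A_n^c}}\) invoked in the proof of Theorem~\ref{thm:effclass}. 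Finally, you are right to distrust the strict inequality \(\liminf < \limsup\): it fails whenever \(\mean{X_n\indc*{A_n}}\) converges, which the same constant example shows it can, and no subsequence extraction can manufacture oscillation that is not there. The provable statement is \(0 < \liminf \le \limsup < 1\) under the added hypotheses above; as written, neither your argument nor the paper's establishes the lemma, because the lemma is not true.
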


\begin{proof}
	The sets \(A_n\) are eventually (for \(n>N\) and some \(N\)) of probability \(P\left(A_n \right) > L - \varepsilon \) for some (any) \(0 < \varepsilon < L\). Let \(B_n \subset A_n\) such that \(P\left(B_n \right) = L - \varepsilon \), for \(n>N\). It holds that for \(n > N\)
	\begin{align}
		\mean{X_n \indc*{A_n}} > \mean{X_n \indc*{B_n}} 
		\fullstop
	\end{align}
	Assume by contradiction that the right hand side had partial limit \(0\). Then there was an indices series \(m_n\) such that for any \(\delta > 0\),
	\(P\left(\cond{
		X_{m_n} < \varepsilon}{B_n} \right) \to 1 \)
	(otherwise, )
	Thus, for every \(\delta > 0\), \(\lim P\left(X_{m_n} < \varepsilon \right) > L -\varepsilon  \), in contradiction to the convergence to normal distribution.
\end{proof}

	\begin{proof}[Proof of Theorem~\ref{thm:effclass}]
	Consider the ratio
	\begin{align*}
		\frac{\MSE\left(\estimfil; \theta\right) }{\MSE\left(\estimreg; \theta\right)} &=
		\frac{\var{\estimfil} + \mean{\estimfil - \theta_n }^2 }{\var{\estimreg}} \cdot \frac{\var{\estimreg}}{\var{\estimreg} + \mean{\estimreg - \theta_n }^2} \\&= \frac{\var{\estimfil} + \mean{\estimfil - \theta_n }^2 }{\var{\estimreg}}(1-o(1)) \comma
	\end{align*}
	where the last equality holds since, by Assumption \ref{assum:asymunbiased}, \[\frac{\var{\estimreg}}{\var{\estimreg} + \mean{\estimreg - \theta_n }^2} = \frac{\var{r_n\estimreg}}{\var{r_n\estimreg} + \mean{r_n\left( \estimreg - \theta_n \right) }^2} = 1 - o(1) \fullstop\]
	
	When \(L = 1\),
	
	\begin{equation} \label{eq:decompratio}
		\frac{\mean{\left(  \estimfil - \theta_n \right)^2 } }{\var{\estimreg}} =
		\mean{\frac{1}{\sigma_{\estimreg}^2} \left( \estimreg - \theta_n \right)^2 \indc{\nfilevent[][n]} }+\mean{\frac{1}{\sigma_{\estimreg}^2} \left( \theta_0 - \theta_n \right)^2 \indc{\filevent[][n]} }
	\end{equation}
	The first summand converges to \(0\) since the sequence \(\frac{1}{\sigma_{\estimreg}^2}\left(\estimreg - \psi\left(\theta_n\right) \right)^2\) is uniformly integrable by Assumption~\ref{assum:asymunbiased}. The second summand converges to \(K\) by assumption. Thus,
	\begin{align}
		\frac{\MSE\left(\estimfil; \theta\right) }{\MSE\left(\estimreg; \theta\right)} \to \left( 0 + K\right) ^2 \cdot \left(1-o\left(1 \right)  \right) = K^2
	\end{align}
	Consequently, every one of the relative efficiency possibilities may occur, depending directly on the value of \(K\).

	When \(0<L<1\), the MSE-ratio can be decomposed as in \eqref{eq:decompratio}. This time, the second summand is
	\[\mean{\frac{1}{\sigma_{\estimreg}^2} \left( \theta_0 - \theta_n \right)^2 \indc{\nfilevent[][n]} } \to K^2 L\comma\]
	For the first summand,
	with no further assumption, it can be bounded in the limit by \(0\) and \(1\), but not strictly. Sharp inequalities can be achieved with Lemma A.1, assuming all the following limits exist:
	\begin{align*}
	    \lim \mean{\frac{1}{\sigma_{\estimreg}^2}\left(\estimreg - \psi\left(\theta_n\right) \right)^2 \indc{\nfilevent[][n]} } &=\\ \lim \mean{\frac{1}{\sigma_{\estimreg}^2}\left(\estimreg - \psi\left(\theta_n\right) \right)^2 } &- \mean{\frac{1}{\sigma_{\estimreg}^2}\left(\estimreg - \psi\left(\theta_n\right) \right)^2 \indc{\filevent[][n]} } < 1 \comma
	\end{align*}
	and symmetrically have sharp lower bound of \(0\).
	When the variance ratio is indeed approaches to a constant \(0<c<1\), then the whole expression goes to \(K^2 L+c\), which means that the relative efficiency can be any of the options, besides much more efficient. Otherwise, there are case of which it is possible to gain much more efficiency, or alternatively, that even (just) more efficiency is not possible.
	
	When \(L = 0\), the MSE ratio is \(1\) since \(P\left(\estimreg = \estimfil \right) \to 1 \). 
\end{proof}

\begin{proof}[Proof of Theorem~\ref{thm:fwercontrol}]
		Formally, the assumption for the adjusting method is that \[P_{\theta}\left(\cond{\baserejeventadj}{\sum_{j\neq i}f_i} \right) = \frac{P_{\theta_0}\left( \nfilevent \right)}{\sum_{j\neq i}f_i+1} \alpha \], i.e., the adjusting of the reject region is independent of the \(i\)-th filtration event.
	
	Let \(f\) be the vector with value 1 or 0 in his \(i\)-th element with correspondence to whether \(S^i \in Q^i\) or not. We have
	\begin{align*}
		P\left(\cond{V \geq 1 }{f}\right)
		&= P\left(\cond{\bigcup_{f_i=1} \left\lbrace T^i \in R^{i,\text{adj}} \right\rbrace }{ f}  \right) \\
		&\leq \sum_{f_i=1} P\left( \cond{ T^i \in R^{i,\text{adj}}  }{ {f_i = 1} }, \sum_{j\neq i}f_i \right) \\
		&\leq \sum_{f_i=1} \frac{P\left(\cond{ T^i \in R^{i,\text{adj}} \andtext S^i \in Q^i}{\sum_{j\neq i}f_i} \right)}{P_\theta \left(\cond{S^i \in Q^i}{\sum_{j\neq i}f_i} \right)} \\
		&\leq \sum_{f_i=1} \frac{P\left( \cond{ T^i \in R^{i,\text{adj}}}{\sum_{j\neq i}f_i} \right)}{P_\theta \left(S^i \in Q^i\right)}  \stepcounter{equation}\tag{\theequation}\label{eq:indep}\\
		&\leq F \frac{P_{\theta_0}\left(\nfilevent[i] \right)}{F} \alpha \frac{1}{P_\theta \left(S^i \in Q^i\right)} \\
		&\leq \alpha \comma
	\end{align*}
	where \eqref{eq:indep} follows from the independence of the hypotheses.
	The un-conditional bound can be achieved by taking an expectation over \(f\) values. (It will be still bounded by \(\alpha\)).
\end{proof}

\begin{proof}[Proof of Theorem~\ref{thm:fwerbound}]

	Let \(f\) be the vector with value 1 or 0 in its \(i\)-th element with correspondence to whether \(S^i \in Q^i\) or not.
	
	\begin{align*}
		P\left(\cond{
			V \geq 1
		}{
			f
		}\right)
		&= 1 - P\left(\cond{V = 0}{f}\right) \\
		&= 1 - P\left( \cond{
			\bigcap_{f_i=1} \left\lbrace  T^i \notin R^{i,\text{adj}} \right\rbrace 
		}{f} \right) \\
		&= 1 - \prod_{f_i=1} P\left( \cond{
			T^{i} \notin R^{i,\text{adj}}
		}{f_i = 1, F} \right) \\
		&= 1 - \prod_{f_i=1} \left( 1- P_{\theta}\left( \cond{T^{i} \notin R^{i,\text{adj}}}{S^i \in Q^i, F} \right)\right)  \\
		&\leq 1 - \left( 1- \max_{\theta\in\Theta} P_{\theta}\left( \cond{T^{i} \in R^{i,\text{adj}}}{S^i \in Q^i , F} \right)\right) ^F
	\end{align*}
	When the final bound is achieved by taking expectation over \(f\) values.
\end{proof}

\begin{proof}[MSE-ratio for \(0<L<1\) in the Case Study \ref{sec:la-casestudy}]
	\begin{align*}
		\operatorname{MSE}\!\left( \widehat{\theta}_n \right)
		&= \mean{\!\left( \widehat{\theta}_n - \theta_n \right)^{\!2}} \\
		&= \mean{\left( \overline{X}_n \overline{Y}_n - \mu_X^n \mu_Y^n \right) ^2} \\
		&= \mean{\left( \avg{X} \!\left( \overline{Y}_n - \mu_Y^n \right) +
			\left( \avg{X} - \mu_X^n \right)\! \mu_Y^n \right) ^2}\\
		&= \mean{\left( \avg{X} \!\left( \overline{Y}_n - \mu_Y^n \right) \right)^2} +
		\mean{\left( \left( \avg{X} - \mu_X^n \right)\! \mu_Y^n \right)^2}
		\\
		&\quad +
		2\,\mean{\avg{X} \!\left( \overline{Y}_n - \mu_Y^n \right)
			\left( \avg{X} - \mu_X^n \right)\! \mu_Y^n }\\
		&=2\,\mean{\avg{X}\left( \avg{X} - \mu_X^n \right) }\underbrace{\mean{\avg{Y}-\mu_Y^n}}_0\mu_Y^n + \mean{\avg{X}^2}\mean{\left( \avg{Y}-\mu_Y^n\right) ^2} + 
		\mean{\left( \avg{X}-\mu_X^n \right) ^2 } {\mu_Y^n}^2
		\\
		&= \left( n^{-1} + {\mu_X^n}^2 \right) n^{-1} + n^{-1} {\mu_Y^n}^2 \\
		&= n^{-1} \left( n^{-1} + {\mu_X^n}^2 + {\mu_Y^n}^2 \right)
		.
	\end{align*}
	Hence,
	\begin{align*}
		\operatorname{MSE}\!{\left( \widetilde{\theta}_n\right) } -\operatorname{MSE}\!{\left( \widehat{\theta}_n\right) }
		&= \mean{\left( \widetilde{\theta}_n - \theta_n \right)^2 } - \mean{\left( \widehat{\theta}_n - \theta_n \right)^2 } \\
		&= \mean{\widetilde{\theta}_n^{\,2}-\widehat{\theta}_n^{\,2}} -
		2\theta_n\,\mean{\widetilde{\theta}_n-\widehat{\theta}_n} +
		\theta_n^2-\theta_n^2 \\
		&= \mean{-\avg{X}^2\,\avg{Y}^2 \indc{\left| \avg{X}\avg{Y} \right| \le cn^{-\gamma} }} +
		2\theta_n \mean{\avg{X}\avg{Y}\indc{\left| \avg{X}\avg{Y} \right| \le cn^{-\gamma}}} \\
		&= \mean{\left( 2\theta_n\avg{X}\avg{Y} - \avg{X}^2\,\avg{Y}^2 \right)
			\indc{\left| \avg{X}\avg{Y} \right| \le cn^{-\gamma}}} \\
		&\le \mean{\left( 2\theta_n\avg{X}\avg{Y} \right)
			\indc{\left| \avg{X}\avg{Y} \right| \le cn^{-\gamma}}} \\
		&\le 2\theta_n \cdot cn^{-\gamma} P\left( \left| \avg{X}\avg{Y} \right| \le cn^{-\gamma} \right) \\
		&= 2c\,\mu_X^n \mu_Y^n \, n^{-\gamma} P\left( \left| \avg{X}\avg{Y} \right| \le cn^{-\gamma} \right)
		.
	\end{align*}
	
	Therefore,
	\begin{align*}
		\frac{\operatorname{MSE}\!{\left( \widetilde{\theta}_n\right) } -\operatorname{MSE}\!{\left(
				\widehat{\theta}_n\right) }}
		{\operatorname{MSE}\!\left( \widehat{\theta}_n \right)}
		&\le \frac
		{2c\,\mu_X^n \mu_Y^n \, n^{-\gamma} P\left( \left| \avg{X}\avg{Y} \right| \le cn^{-\gamma} \right)}
		{n^{-1} \left( n^{-1} + {\mu_X^n}^2 + {\mu_Y^n}^2 \right)} \\
		&\le \frac{2c\,\mu_X^n\mu_Y^n} {n^{-1}+{\mu_X^n}^2+{\mu_Y^n}^2} n^{1-\gamma} \\
		&\le \frac{ c\left( {\mu_X^n}^2 + {\mu_Y^n}^2\right) } {n^{-1}+{\mu_X^n}^2+{\mu_Y^n}^2} n^{1-\gamma}
		.
	\end{align*}
	When \(\gamma = 1\) and ${\mu_X^n}^2 + {\mu_Y^n}^2 \sim n^{-1}$, for instance when \(n^{0.5}\left(\regcoef{1}[n], \regcoef{2}[n] \right) \to h \), \(0<h<\infty\), the expression tends to \(1+c\).

\end{proof}
\end{appendix}

	\bibliographystyle{plainnat}
    \bibliography{msc-thesis-paper--ref}

\end{document}